\newcommand{\N}{{\mathbb{N}}}
\newcommand{\C}{{\mathbb{C}}}
\newcommand{\R}{{\mathbb{R}}}
\let\Im=\undefined\DeclareMathOperator*{\Im}{Im}
\DeclareMathOperator*{\dist}{dist}
\DeclareMathOperator*{\Id}{Id}
\newcommand\qtq[1]{{\quad\text{#1}\quad}}
\newcommand{\eps}{{\varepsilon}}
\newcommand{\tE}{\mathcal{\tilde E}}
\newcommand{\rstrut}{{\vrule width 0ex depth 0ex height 1.2ex}}
\theoremstyle{plain}
\newtheorem{theorem}{Theorem}
\newtheorem{proposition}[theorem]{Proposition}
\newtheorem{lemma}[theorem]{Lemma}
\newtheorem{corollary}[theorem]{Corollary}
\theoremstyle{definition}
\newtheorem{definition}[theorem]{Definition}
\theoremstyle{remark}
\newtheorem*{remarks}{Remarks}
\newenvironment{CI}{\begin{list}{{\ $\bullet$\ }}{%
\setlength{\topsep}{0mm}\setlength{\parsep}{0mm}\setlength{\itemsep}{0mm}%
\setlength{\labelwidth}{0mm}\setlength{\itemindent}{0mm}\setlength{\leftmargin}{0mm}%
\setlength{\labelsep}{0mm} }}{\end{list}}
\numberwithin{equation}{section}
\numberwithin{theorem}{section}
\begin{document}

\title[Blowup on smooth hypersurfaces for NLW]{Smooth solutions to the nonlinear wave equation can blow up on Cantor sets}
\author{Rowan Killip}
\address{Department of Mathematics\\UCLA\\Los Angeles, CA 90095}
\author{Monica Vi\c{s}an}
\address{Department of Mathematics\\UCLA\\Los Angeles, CA 90095}
%\subjclass{35Q55}

\begin{abstract}
We construct $C^\infty$ solutions to the one-dimensional nonlinear wave equation
$$
u_{tt} - u_{xx} - \tfrac{2(p+2)}{p^2} |u|^p u=0 \quad \text{with} \quad  p>0
$$
that blow up on any prescribed uniformly space-like $C^\infty$ hypersurface.  As a corollary, we show that smooth solutions
can blow up (at the first instant) on an arbitrary compact set.

We also construct solutions that blow up on general space-like $C^k$ hypersurfaces, but only when $4/p$ is not an integer and $k > (3p+4)/p$.
\end{abstract}

\maketitle

\section{Introduction}

In this paper we consider the one-dimensional focusing nonlinear wave equation
\begin{equation}\label{nlw}
u_{tt} - u_{xx} - \tfrac{2(p+2)}{p^2} |u|^p u=0
\end{equation}
for general powers $p>0$.  The coefficient of the nonlinearity can be changed via the rescaling $u_\lambda(t,x):=u(\lambda t,\lambda x)$.  However, this particular
choice simplifies many of the formulae that follow.

Even for smooth initial data, solutions to \eqref{nlw} can blow up in finite time.  The simplest example of this is
\begin{equation}\label{simple model}
u(t,x) = (T-t)^{-2/p} \qtq{defined for} t<T \qtq{and} x\in\R.
\end{equation}
(Note the benefit of choosing the coefficient $2(p+2)/p^2$ in \eqref{nlw}.)

The equation \eqref{nlw} is relativistically invariant.  This blurs the distinction between space and time, as well as implying finite speed of propagation.
As a result, it is natural to consider the maximal space-time region on which the solution is defined.  Moreover, this region must take the form
$$
\{(t,x) : \tilde\sigma(x) < t < \sigma(x) \},
$$
where either $\sigma(x)\equiv \infty$ or $\sigma$ is a $1$-Lipschitz function, that is,
$$
\sup_{x,y} \frac{|\sigma(x)-\sigma(y)|}{|x-y|} \leq 1,
$$
and either $\tilde \sigma(x)\equiv-\infty$ or $\tilde\sigma$ is a $1$-Lipschitz function.  (The number $1$ represents the speed of light in our model.)  A point $t=\sigma(x)$ on the (forward)
blowup surface is called \emph{non-characteristic} if
$$
 \limsup_{y\to x} \frac{|\sigma(x)-\sigma(y)|}{|x-y|} < 1;
$$
otherwise, it is called \emph{characteristic}.  The surface $\{t=\sigma(x)\}$ is called \emph{space-like} if it is everywhere non-characteristic.

In this paper we discuss the question of which surfaces can occur as blowup surfaces.  In the papers \cite{CF1,CF2}, Caffarelli and Friedman showed that
$\sigma$ must be $C^1$ at non-characteristic points for a closely related model (they also obtain conditional results in higher dimensions).  More recently,
Merle and Zaag considered precisely the model \eqref{nlw} in a series of papers; see \cite{MerleZaag1,MerleZaag2} and the references therein.
They have shown that the characteristic points form a discrete set and that $\sigma$ is $C^1$ away from these points.  (The also obtain a wealth of information
about how the solution behaves near the blowup surface.)  In \cite{Nouaili}, Nouaili extended their methods to show that away from the characteristic points,
$\sigma$ is $C^{1,\alpha}$ for some (unspecified) H\"older exponent $\alpha>0$.

In the other direction, one may seek to show that certain rich classes of functions $\sigma$ do indeed arise as blowup surfaces.  The only works we know of that
address this type of question are the papers \cite{KL1,KL2} of Kichenassamy and Littman.  These authors consider arbitrary spatial dimension with $p\in\N$ and show
that any analytic space-like surface can occur as a blowup surface.  In his lecture notes on the subject, Alinhac explicitly asks about the possibility of extending the work of Kichenassamy and Littman to the $C^\infty$ case; see \cite[p. 9]{Alinhac}.  In this paper, we work only in one dimension and show that general $C^\infty$ space-like surfaces can indeed occur as blowup surfaces.  Our methods apply also in the case of finite regularity; however, for technical reasons, the argument becomes messier if $4/p$ is an integer, so we are omitting that case.

\begin{theorem}[Regular solutions that blow up on a space-like $C^k$ hypersurface]\label{T:main}\leavevmode\\
Suppose either {\rm(i)} $p>0$ and $k=\infty$, or {\rm(ii)} $\frac4p\not\in \N$ and $3+\frac4p<k\in \N\cup\{\infty\}$.  Let $\Sigma=\{ t=\sigma(x)\}$
be a $C^k$ uniformly space-like hypersurface {\rm(}i.e., $\sup_x |\sigma'(x)| < 1${\rm)}.  Then there exist a nearby space-like hypersurface $\Sigma_0:=\{t=\sigma_0(x)\}$
with $\sigma_0(x)<\sigma(x)$ and a solution $u\in C^{m}_{t,x}(\{\sigma_0(x)\leq t<\sigma(x)\})$ to \eqref{nlw}, with $m=\lfloor k-3-4/p\rfloor$, that blows up on $\Sigma$ in the sense that
\begin{align}\label{blowup}
\lim_{t\nearrow \sigma(x)} \bigl(\sigma(x) -t\bigr)^{\frac2p}u(t,x) = \bigl( 1-|\sigma'(x)|^2\bigr)^{\frac1p}
\end{align}
for all $x\in\R$.
\end{theorem}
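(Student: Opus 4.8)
The plan is to \emph{flatten} the surface $\Sigma$, build a formal singular expansion of the solution along it, and then realize the exact solution as a controlled remainder by solving a Fuchsian boundary value problem at $\Sigma$. To flatten, introduce coordinates $(\tau,y)=(\sigma(x)-t,\,x)$, so that $\Sigma=\{\tau=0\}$; using $\partial_t=-\partial_\tau$ and $\partial_x=\partial_y+\sigma'(y)\partial_\tau$, equation \eqref{nlw} becomes
\[
\bigl(1-\sigma'(y)^2\bigr)u_{\tau\tau}-2\sigma'(y)u_{\tau y}-u_{yy}-\sigma''(y)u_\tau=\tfrac{2(p+2)}{p^2}|u|^pu
\]
on $\{\tau>0\}$, which is strictly hyperbolic (the discriminant of the principal part is $4\xi_y^2>0$) with the level sets $\{\tau=\mathrm{const}\}$ — the uniformly space-like translates of $\Sigma$ — space-like.

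Next, posit a formal solution $u\sim\tau^{-2/p}\sum_{j\ge0}c_j(y)\tau^j$. The most singular terms balance — this uses the identity $\tfrac{2(p+2)}{p^2}=\tfrac2p\bigl(\tfrac2p+1\bigr)$ — forcing $\bigl(1-\sigma'(y)^2\bigr)c_0=|c_0|^pc_0$, hence $c_0(y)=\bigl(1-\sigma'(y)^2\bigr)^{1/p}$, the right-hand side of \eqref{blowup}. Matching the coefficient of $\tau^{-2/p+j-2}$ gives the recursion
\[
\bigl(1-\sigma'(y)^2\bigr)(j+1)\Bigl(j-2-\tfrac4p\Bigr)c_j=F_j\bigl(c_0,\dots,c_{j-1};\sigma',\sigma''\bigr),
\]
with $F_j$ involving at most two $y$-derivatives of the earlier coefficients; the bracket factors this way because the indicial equation for a perturbation $\tau^{-2/p+r}$ of $c_0\tau^{-2/p}$ has discriminant $(3p+4)^2/p^2$, so its roots are $r=-1$ and $r=2+\tfrac4p$. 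When $\tfrac4p\notin\N$ the coefficient $(j+1)(j-2-\tfrac4p)$ never vanishes on $\N$, so the $c_j$ are uniquely determined; in case~(i) with $\tfrac4p\in\N$ one admits a logarithmic term $\tau^{2+4/p}\log\tau$ at the resonant order and continues. As $\sigma\in C^k$ gives $c_0\in C^{k-1}$ and each step costs two derivatives, $c_j\in C^{k-1-j}$; truncating at $N:=\lceil 2+\tfrac4p\rceil$ yields $u_{\mathrm{app}}:=\tau^{-2/p}\sum_{j=0}^N c_j(y)\tau^j$ (plus the log term when needed), which solves the flattened equation with residual $R=O(\tau^{-2/p+N-1})$ and lies in $C^{k-1-N}=C^{\lfloor k-3-4/p\rfloor}$ on $\{\tau>0\}$.

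Now seek the exact solution as $u=u_{\mathrm{app}}+v$ with $v$ required to decay like the forcing, $v=O(\tau^{-2/p+N-1})=o(\tau^{-2/p})$, as $\tau\to0^+$. Then $\mathcal Lv=\mathcal Q(v)+R$, where $\mathcal L$ is the linearization of the flattened operator at $u_{\mathrm{app}}$ — a strictly hyperbolic operator with potential $(p+1)\tfrac{2(p+2)}{p^2}|u_{\mathrm{app}}|^p\sim\tau^{-2}$, i.e.\ a Fuchsian singularity at $\tau=0$ with indicial roots $-1$ and $2+\tfrac4p$ (homogeneous behaviors $\tau^{-2/p-1}$ and $\tau^{2+2/p}$) — and $\mathcal Q(v)$ collects the nonlinear terms. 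Requiring that $v$ carry neither the singular mode $\tau^{-2/p-1}$ nor the free mode $\tau^{2+2/p}$ (taking the free function to be zero) is a well-posed Fuchsian boundary condition at $\Sigma$: after renormalizing $v$ by a power of $\tau$ the two indicial roots lie on opposite sides of $0$, and a $\tau$-weighted energy estimate together with a contraction argument — the $\tau^{-2}$ weights in $\mathcal Q$ being absorbable because $N$, hence $k$, is large — give a solution $v$ on a strip $\{0<\tau<\delta\}$. With $\sigma_0(x):=\sigma(x)-\delta$, the function $u=u_{\mathrm{app}}+v$ solves \eqref{nlw} on $\{\sigma_0(x)\le t<\sigma(x)\}$, lies in $C^{\lfloor k-3-4/p\rfloor}$ there, and
\[
\bigl(\sigma(x)-t\bigr)^{2/p}u(t,x)=\sum_{j=0}^N c_j(x)\tau^j+\tau^{2/p}v\ \longrightarrow\ c_0(x)=\bigl(1-|\sigma'(x)|^2\bigr)^{1/p}\qtq{as}\tau\to0,
\]
which is \eqref{blowup}.

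The crux is the last step: solvability of the degenerate boundary value problem for $v$ up to the blowup surface with the sharp decay rate. One must run energy estimates for a wave operator whose inverse-square potential has exactly the critical coefficient, track the indicial roots $-1$ and $2+\tfrac4p$ through the $\tau$-weights (this is what forces $\tfrac4p\notin\N$ in the finite-regularity case: to avoid logarithms and keep the exponents off the integer lattice used by the iteration), cope with the merely $C^{k-1}$, $C^{k-2}$ coefficients $\sigma',\sigma''$ (the source of the loss $k\mapsto\lfloor k-3-4/p\rfloor$), and close the nonlinear iteration in spite of the singular weights seen by $\mathcal Q$. Finite speed of propagation permits localizing in $x$ and patching, so no hypothesis on $\sigma$ beyond uniform space-likeness is needed.
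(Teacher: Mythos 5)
Your proposal takes a genuinely different route from the paper, and while the formal-series and Fuchsian-indicial analysis is correct and matches the paper's, the central analytic step is left as an assertion and is substantially harder in your setup than in the paper's.

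The paper does \emph{not} flatten $\Sigma$ by the naive change of variables $(\tau,y)=(\sigma(x)-t,x)$; it uses the Minkowski analogue of the Riemann mapping theorem (Proposition~\ref{P:conformal exists}) to produce a conformal diffeomorphism of $\{t\le\sigma(x)\}$ onto the half-plane $\{s\ge 0\}$ that is moreover an isometry on the boundary. Because the map is conformal, the d'Alembertian transforms as $\partial_s^2-\partial_y^2=\lambda(s,y)(\partial_t^2-\partial_x^2)$ with a scalar factor $\lambda$, so the transformed equation \eqref{mnlw} still has the \emph{constant-coefficient} operator $\partial_s^2-\partial_y^2$, and the only variable coefficient sits harmlessly in front of the nonlinearity, with $\lambda(0,y)\equiv 1$. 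This is exactly what lets the linearized equation for the remainder be the translation-invariant singular wave equation $w_{ss}-w_{yy}-(\nu^2-\tfrac14)s^{-2}w=F$ of \eqref{singular eq}. The paper then takes the spatial Fourier transform, identifies the resulting ODE as Bessel's equation, and derives sharp $L^1_y$ kernel estimates (Proposition~\ref{P:dispersive}: the bound $(s-s_0)(s/s_0)^{\nu-1/2}$, and the measure bounds on $\partial_{s_0}k$ and $\partial_s k$) that are precisely what the contraction mapping in the weighted spaces $X^m_\delta$ needs. Your flattening instead produces $(1-\sigma'(y)^2)u_{\tau\tau}-2\sigma'(y)u_{\tau y}-u_{yy}-\sigma''(y)u_\tau=\dots$, whose linearization is a variable-coefficient hyperbolic operator with an inverse-square potential. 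It is not translation-invariant in $y$, so the Fourier/Bessel machinery is unavailable, and the proposed replacement --- ``a $\tau$-weighted energy estimate together with a contraction argument'' --- is the entire content of the paper's Sections~\ref{S:estimates} and~\ref{S:construction} and is not carried out. This is not a cosmetic shortcut: the sharp kernel bound \eqref{k bound 0} with the growth factor $(s/s_0)^{\nu-1/2}$ is exactly what allows one to pass the Duhamel integral down to $s_0=0$ with the critical decay rate $s^{2+\delta}$, $\delta>2/p$, needed to land in the unique decaying mode. $L^2$-energy estimates for the variable-coefficient Fuchsian operator would need to reproduce this rate, and then be converted to the $L^\infty$ control that the iteration norm $X^m_\delta$ requires --- a nontrivial Sobolev loss in your setting. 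What your route buys in exchange is dimension-independence: the paper's conformal straightening is a special feature of $\R^{1+1}$ (by Liouville's theorem, as the introduction notes), whereas a graph flattening plus a would-be energy/Nash--Moser argument is at least formally extendable to higher dimensions. But as written, the step ``solvability of the degenerate boundary value problem for $v$ up to the blowup surface with the sharp decay rate'' is a gap, not a lemma, and it is precisely the part the paper works hardest to prove.

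Two smaller points. First, your truncated ansatz $u_{\mathrm{app}}=\tau^{-2/p}\sum_{j=0}^{N}c_j\tau^j$ with $N=\lceil 2+4/p\rceil$ produces a residual $O(\tau^{-2/p+N-1})$, which is indeed $o(\tau^{2/p})$ as required for uniqueness, and the last coefficient $c_N\in C^{k-1-N}$ caps the regularity at $\lfloor k-3-4/p\rfloor$; this is consistent with the paper, which instead uses a Whitney/Borel extension to build a genuinely $C^{k-1}$ parametrix and lets the correction $w$ (via the weight restriction $m+\delta\le k-3-2/p$, $\delta>2/p$) be the limiting factor. For $k=\infty$ you would likewise need a Borel-type resummation, not just a finite truncation, to get a $C^\infty$ parametrix with infinite-order accuracy, as in Proposition~\ref{P:parametrix}. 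Second, the paper's boundary isometry normalizes the leading Taylor coefficient to $\rho_0\equiv 1$ rather than $c_0(y)=(1-\sigma'(y)^2)^{1/p}$; all $y$-variation is pushed into $\lambda$, which simplifies both the recursion and the verification of the Lorentz-covariant blowup rate \eqref{blowup} via \eqref{coresp}.
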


\begin{remarks} 1. The floor notation, $\lfloor x\rfloor$, denotes the greatest integer less than or equal to $x$.  Similarly, the ceiling $\lceil x \rceil$ is the
least integer greater than or equal to $x$.

2. The blowup rate in \eqref{blowup} is Lorentz invariant: to this degree of accuracy, $u(t,x)$ agrees with the Lorentz boost of the solution \eqref{simple model} that blows up
on the tangent line to $\Sigma$ passing through the point $t=\sigma(x)$.

3. The space-like hypersurface $\{t=\sigma_0(x)\}$ may follow the blowup surface $\{t=\sigma(x)\}$ so closely that our solution $u$ is not defined globally in space
at any one particular time.  (This issue also appears in results of \cite{KL2}.)

4. The condition $k>3+\frac4p$ arises as a natural requirement for our method.  This is discussed at some length in Section~\ref{S:construction}.
\end{remarks}

Our initial interest in treating the case of smooth (as opposed to analytic) blowup surfaces was to address the question alluded to in the title:
Can solutions to nonlinear wave equations blow up on Cantor-like sets?  By this we mean that at the time when the first singularity occurs, can this singularity arise simultaneously
across a Cantor-like set?  Note that if $\sigma$ is an analytic function, then $\{x : \sigma(x)=T\}$ must be discrete (or all of $\R$); however, given an arbitrary compact set $E$,
it is not difficult to construct a smooth function for which $E$ is a level set.  In this way, Theorem~\ref{T:main} allows us to construct a solution to \eqref{nlw} that blows up on an arbitrary compact set.  Moreover, we are able to control the time of existence sufficiently well that we can guarantee that the solution admits global (in space) Cauchy data (cf. Remark~3 above).
The precise statement is as follows:

\begin{corollary}[Blowup on arbitrary compact sets]\label{C:Cantor}
Fix $p>0$.  Given a compact set $E\subset \R$, there exist $C^\infty(\R)$ initial data $(u(0), u_t(0))$ and a smooth solution to \eqref{nlw} with this initial data that blows up at a time $0<T<\infty$ precisely on the set~$E$, which is to say,
\begin{CI}
\item $u(t,x)$ converges to a smooth function of $x \in \R\setminus E$ when $t\to T$ and
\item $\lim_{t\to T} u(t,x) =\infty$ for all  $x\in E$.
\end{CI}
\end{corollary}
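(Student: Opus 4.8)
The plan is to realize $E$ as a level set of a smooth, uniformly space-like function, apply Theorem~\ref{T:main}, and then repair the local-in-$x$ deficiency flagged in Remark~3 by combining a time translation with finite speed of propagation. We may assume $E\neq\emptyset$, the empty case being covered by $u\equiv 0$. Using the standard fact that every closed subset of $\R$ is the zero set of a nonnegative $C^\infty$ function all of whose derivatives are bounded, pick $f\in C^\infty(\R)$ with $0\le f\le 1$, $f^{-1}(0)=E$, and $\sup|f'|<\infty$, and set $\sigma:=1+\delta f$ with $\delta>0$ so small that $\sup_x|\sigma'(x)|<1$. Then $\Sigma:=\{t=\sigma(x)\}$ is a $C^\infty$ uniformly space-like hypersurface with $\sigma\ge 1$ and $\sigma^{-1}(1)=E$, so Theorem~\ref{T:main} (case $k=\infty$) yields a space-like $\Sigma_0=\{t=\sigma_0(x)\}$ with $\sigma_0<\sigma$ and a solution $u\in C^\infty(\{\sigma_0(x)\le t<\sigma(x)\})$ of \eqref{nlw} obeying \eqref{blowup}. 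Since $\sigma(x)=1$ and $(1-|\sigma'(x)|^2)^{1/p}>0$ for $x\in E$, \eqref{blowup} gives $u(t,x)\to+\infty$ as $t\nearrow 1$ for every $x\in E$; and whenever $x\notin E$ with $\sigma_0(x)<1$, the point $(1,x)$ is interior to the domain of $u$, so $u(t,x)$ converges as $t\nearrow 1$ to a value depending smoothly on $x$. Thus $u$ already blows up exactly on $E$ at time $1$; what it fails to do, by Remark~3, is supply Cauchy data on a fixed time slice.

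\emph{Time translation.} As $E$ is compact, $\sigma_0<\sigma$, and $\sigma\equiv 1$ on $E$, we have $\max_E\sigma_0<1$; fix $\beta>0$ with $\sigma_0<1-2\beta$ on $E$ and then $\rho>0$ with $\sigma_0<1-\beta$ on $N:=\{x:\dist(x,E)<\rho\}$. For $T\in(0,\beta]$ (to be made small) set $v(t,x):=u(t+1-T,x)$; this solves \eqref{nlw}, is defined for $\sigma_0(x)-1+T\le t<\sigma(x)-1+T$, blows up forward on $\{t=\sigma(x)-1+T\}$, whose minimum value $T$ is attained exactly on $E$, and has lower surface $\sigma_0(x)-1+T<T-\beta\le 0$ on $N$. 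Hence $v(0,\cdot),v_t(0,\cdot)\in C^\infty(N)$, and $v(t,x)\to+\infty$ as $t\nearrow T$ for $x\in E$, while $v(t,x)$ converges smoothly for $x\in N\setminus E$.

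\emph{Global Cauchy data via finite speed of propagation.} Fix $\zeta\in C_c^\infty(\R)$ with $\zeta\equiv 1$ on $\{\dist(\cdot,E)\le\rho/2\}$ and $\operatorname{supp}\zeta\subset N$, and put $(\Phi,\Psi):=(\zeta v(0,\cdot),\zeta v_t(0,\cdot))\in C_c^\infty(\R)$, so $(\Phi,\Psi)=(v(0,\cdot),v_t(0,\cdot))$ on a neighborhood of $E$. Let $V$ solve \eqref{nlw} with $(V,V_t)|_{t=0}=(\Phi,\Psi)$; it is smooth on its maximal forward existence region. By finite speed of propagation, $V=v$ on $\{(t,x):t\ge 0,\ \dist(x,E)\le\rho/2-t\}$ wherever both are defined; for $T$ small enough this region contains a one-sided neighborhood of $\{(T,x):x\in E\}$ and of $\{(t,x):0\le t\le T,\ \dist(x,E)<\rho/4\}$, so $V$ blows up exactly on $E$ as $t\nearrow T$ and converges smoothly there for $x\notin E$. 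Outside the domain of influence of $\operatorname{supp}\Phi$ one has $V\equiv 0$. Finally, on the bounded set $\{\rho/4\le\dist(\cdot,E)\le\rho+T\}$ the pertinent data is dominated by $v(0,\cdot)=u(1-T,\cdot)$ on a compact subset of $\{x\notin E\}$, which is bounded uniformly as $T\to0$ (that subset lies a fixed distance below $\Sigma$); choosing $T$ below the associated local-existence time then guarantees $V$ persists past $t=T$ there. Consequently $V\in C^\infty([0,T)\times\R)$ solves \eqref{nlw}, converges to a smooth function of $x\in\R\setminus E$ as $t\nearrow T$, and blows up precisely on $E$, and $(\Phi,\Psi)\in C^\infty(\R)$ are the desired initial data.

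\emph{The main obstacle.} Everything except the last step is routine. The crux is the phenomenon of Remark~3: Theorem~\ref{T:main} need not produce a solution defined on any whole time slice. The translation in the second step pushes the existence surface $\Sigma_0$ below $t=0$ near $E$, and the finite-speed argument then keeps only a neighborhood of $E$ and discards the uncontrolled part of $\Sigma_0$. The delicate point is that this surgery might a priori create fresh blow-up off $E$ before time $T$; ruling that out is what forces $T$ to be chosen small relative both to the geometry of $\Sigma_0$ near $E$ and to a local-well-posedness time.
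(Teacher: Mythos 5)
Your argument is correct, but it takes a genuinely different route to resolving the deficiency flagged in Remark~3. The paper's own proof builds $\sigma$ scaled by a small parameter $\eps$, tracks that $\|\lambda-1\|_{C^k}\lesssim_k\eps$, and then invokes the final clause of Theorem~\ref{T:main'}---that the existence time $s_0(p,\lambda)$ has a lower bound depending only on the $C^{\lceil 2+4/p\rceil}$ norm of $\lambda$---to conclude that for small $\eps$ the conformal image $\Phi(\{0\le s\le s_0\})$ already contains the full slab $\{0\le t<\sigma(x)\}$; nothing needs to be cut off or re-solved. You instead treat Theorem~\ref{T:main} as a black box (never using that quantitative robustness of $s_0$), translate in time so that $\Sigma_0$ drops below $\{t=0\}$ on a neighborhood $N$ of $E$, truncate the data by a cutoff $\zeta$ supported in $N$, and re-solve; finite speed of propagation then forces $V=v$ near $E$, $V\equiv 0$ far away, and a local-existence argument handles the intermediate annulus where the (cut-off) data is uniformly bounded as $T\to0$. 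What the paper's approach buys is a single, seamless solution on all of $\{0\le t<\sigma(x)\}$ with no gluing and no appeal to local wellposedness; it leans on having proved the quantitative $s_0$ estimate. What your approach buys is independence from that quantitative estimate---only the qualitative conclusion of Theorem~\ref{T:main} is used---at the cost of the cut/paste argument and the bookkeeping of domains of dependence, which does require a bit more care than your sketch supplies (e.g.\ the domain of dependence of the annulus $\{\dist(\cdot,E)\ge\rho/4\}$ for $t\le T<\rho/8$ reaches down to $\{\dist(\cdot,E)\ge\rho/8\}$, so that is the set on which one needs the uniform $C^m$ bounds on the data as $T\to0$). Both are valid proofs of the corollary.
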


The proof of Theorem~\ref{T:main} proceeds by constructing a parametrix $\tilde u$ for the desired solution and then using contraction mapping arguments
to correct it to an actual solution $u$ to \eqref{nlw}.  Due to the singularity of the solution we are seeking, the equation obeyed by the correction term
$u-\tilde u$ is of Fuchsian type, which is a PDE analogue of an ODE having a regular singular point.  This has two consequences.  First, the parametrix
needs to be constructed to very high order.  This is done in Section~\ref{S:parametrix}.  Second, the contraction mapping argument revolves around a singular
variant of the linear wave equation.  The basic estimates for this equation are derived in Section~\ref{S:estimates} and then used to construct the solution in
Section~\ref{S:construction}.

Working in one spatial dimension gives rise to a number of simplifications, both mathematical and expository.  We make particular use of the Minkowski-space analogue
of the Riemann mapping theorem (Proposition~\ref{P:conformal exists} below).  This allows us to choose coordinates $(s,y)$ for the space-time region $\{t<\sigma(x)\}$ that straighten out the boundary while only deforming the d'Alembertian, $\partial_t^2-\partial_x^2$, by a conformal factor.  That this is only possible in $\R^{1+1}$ is a famous discovery of Liouville;
specifically, the only (Euclidean or Minkowski) conformal mappings in higher dimensions are M\"obius transformations (see \cite[\S15]{ModernGeom1}).  As the notion of
a conformal mapping relative to a pseudo-Riemannian metric is discussed in relatively few differential geometry textbooks (\cite{ModernGeom1} is one exception), we develop the facts
we require from scratch in Section~\ref{S:conformal}.

While M\"obius transformations in Minkowski space have played a central role in several fundamental developments in the theory of nonlinear wave equations (spectacularly so at the hands of C.~Morawetz or D.~Christodoulou, for example), we have not seen the greater generality available in the $\R^{1+1}$ setting used previously to aid in the analysis of this
type of equation.

\subsection*{Acknowledgements}
We are grateful to Mario Bonk for valuable discussions about conformal mappings in Minkowski space.

The first author was partially supported by NSF grant DMS-1001531.  The second author was partially supported by the Sloan Foundation and NSF grant DMS-0901166.
This work was completed while the second author was a Harrington Faculty Fellow at the University of Texas at Austin.

%%%%%%%%%%%%%%%%%%%%%%%%%%%%%%%%%%%%%%%%%%%%%%%%%%%%%%%%%%%%%%%%%%%%%%%%%%%%%%%%%%%%%%%%%%%
%
%
%                                   Section
%
%
%%%%%%%%%%%%%%%%%%%%%%%%%%%%%%%%%%%%%%%%%%%%%%%%%%%%%%%%%%%%%%%%%%%%%%%%%%%%%%%%%%%%%%%%%%%

\section{Existence and properties of conformal maps} \label{S:conformal}

In this section, we use a conformal mapping (relative to the Minkowski metric) to reduce the problem of constructing solutions to \eqref{nlw} that
blow up on a (sufficiently smooth) space-like hypersurface to that of constructing solutions that blow up on the spatial axis; these latter solutions must solve a suitably modified
nonlinear wave equation.

\begin{definition}[Conformal mapping]\label{D:conformal}
A $C^1$ diffeomorphism $\Phi:(s,y)\mapsto(t,x)$ between two open sets in Minkowski space $\R^{1+1}$ is called \emph{conformal} if there exists a \emph{conformal factor} $\lambda(s,y)>0$ such that
\begin{equation}\label{E:Conf}
\begin{aligned}
\begin{bmatrix} \tfrac{\partial t}{\partial s} \, & \tfrac{\partial t}{\partial y}\\[1ex]  \tfrac{\partial x}{\partial s} \, & \tfrac{\partial x}{\partial y}\end{bmatrix}
\begin{bmatrix} -1 \, & 0\\[1ex]  0 \, & 1\end{bmatrix}
\begin{bmatrix} \tfrac{\partial t}{\partial s} \, & \tfrac{\partial x}{\partial s}\\[1ex] \tfrac{\partial t}{\partial y}  \, & \tfrac{\partial x}{\partial y}\end{bmatrix}
= \lambda(s,y)\begin{bmatrix} -1 \, & 0\\[1ex]  0 \, & 1\end{bmatrix}.
\end{aligned}
\end{equation}
Equivalently, the pull back of the Minkowski metric via $\Phi$ differs from the original Minkowski metric by the scalar factor $\lambda(s,y)>0$.  The mapping $\Phi$ is a (Minkowski)
\emph{isometry} at those points where $\lambda(s,y)=1$.
\end{definition}

From \eqref{E:Conf} it is clear that the null (or light-like) directions are unchanged by a conformal mapping.  As $\lambda>0$, the notions of space- and time-like are preserved.  Thus
there are four classes of conformal mappings depending on whether the spatial and/or temporal orientations are preserved or inverted.  When both are preserved, the mapping is called
\emph{proper} (cf. \cite[\S6.2]{ModernGeom1}).

It is easy to derive the Minkowski analogue of the Cauchy--Riemann equations from \eqref{E:Conf}.  In the case of proper mappings, they read
$$
\tfrac{\partial t}{\partial s}=\tfrac{\partial x}{\partial y} >0 \quad\text{and}\quad \tfrac{\partial t}{\partial y}=\tfrac{\partial x}{\partial s}.
$$
In particular, $(t,x)$ are wave coordinates, that is, $t_{ss}-t_{yy}=0$ and $x_{ss}-x_{yy}=0$ (in the sense of distributions).
Using the general solution of the one-dimensional wave equation, we deduce that all proper conformal mappings of Minkowski space take the form
\begin{equation}\label{E:fg}
t= \tfrac12\bigl[ f(y+s)-g(y-s) \bigr] \quad\text{and}\quad x= \tfrac12\bigl[ f(y+ s) + g(y-s) \bigr],
\end{equation}
where $f$ and $g$ are orientation-preserving $C^1$ diffeomorphisms of $\R$.  (The remaining three types of conformal mappings correspond to allowing $f$ and/or $g$ to be
orientation-reversing.) A more geometrical derivation of this fact (which we learned from Mario Bonk) is as follows:  As conformal mappings preserve the null directions,
the grid of light rays in $(t,x)$ coordinates must be mapped onto the corresponding grid in $(s,y)$.  Thus, a conformal mapping corresponds to an (arbitrary and independent)
change of variables in each of the null directions: $(x+t,x-t)= (f(y+s), g(y-s))$, which is, of course, just another way of writing \eqref{E:fg}.

\begin{proposition}[Existence of conformal maps]\label{P:conformal exists}
Let $2\leq k\in \N\cup\{\infty\}$ and consider a $C^k$ uniformly space-like hypersurface $\Sigma=\{t=\sigma(x)\}$.  Then there exists a $C^k$ conformal diffeomorphism from
$\{s\geq 0, \, y\in\R\}$ onto $\{t \leq \sigma(x)\}$, which is an isometry between the boundaries.  Moreover, the conformal factor $\lambda(s,y)$ is $C_{s,y}^{k-1}$.
\end{proposition}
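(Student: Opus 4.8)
The plan is to write the conformal map explicitly in null coordinates and reduce the two boundary requirements — that $\Phi$ carries $\{s=0\}$ onto $\Sigma$, and that it is an isometry there — to a single first‑order ODE which can be solved by separation of variables. First I would pass to null coordinates $u=x+t$, $v=x-t$ on the target and $y\pm s$ on the domain. As recalled around \eqref{E:fg}, a conformal map amounts to an independent reparametrization of each of the two null families; since we must send the future side $\{s\ge 0\}$ of $\{s=0\}$ onto the past side $\{t\le\sigma(x)\}$ of $\Sigma$, the relevant time‑orientation–reversing form is
$$x+t=F(y-s),\qquad x-t=G(y+s),$$
where $F,G$ are increasing $C^k$ diffeomorphisms of $\R$ to be determined. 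For any such $F,G$ the pull‑back of the Minkowski metric $du\,dv$ is $F'(y-s)\,G'(y+s)\,(dy^2-ds^2)$, so $\Phi$ is automatically conformal with factor $\lambda(s,y)=F'(y-s)\,G'(y+s)>0$.

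Next I would encode $\Sigma$ in these coordinates. Because $\Sigma$ is uniformly space‑like, $u(x):=x+\sigma(x)$ is a $C^k$ diffeomorphism of $\R$, and writing $v(x):=x-\sigma(x)$ the hypersurface becomes the graph $v=\phi(u)$ with $\phi:=v\circ u^{-1}$ an increasing $C^k$ diffeomorphism of $\R$; moreover $\phi'\circ u=(1-\sigma')/(1+\sigma')$, so $\phi'$ is bounded above and below by positive constants depending only on $\sup_x|\sigma'|<1$. A short check shows that the region $\{t\le\sigma(x)\}$ corresponds to $\{v\ge\phi(u)\}$ and that $\{s\ge0\}$ corresponds to $\{y+s\ge y-s\}$. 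Demanding that $\Phi$ send $\{s=0\}$ onto $\Sigma$ then forces $G=\phi\circ F$, and demanding $\lambda\equiv 1$ on $\{s=0\}$, i.e. $F'(y)G'(y)=1$, becomes the autonomous ODE $F'(y)^2\,\phi'(F(y))=1$. Separating variables gives $F=\Psi^{-1}$, where $\Psi(w):=\int_0^w\sqrt{\phi'(\xi)}\,d\xi$; the two‑sided bound on $\phi'$ makes $\Psi$ bi‑Lipschitz and strictly increasing, hence a diffeomorphism of $\R$, and since $\phi\in C^k$ implies $\sqrt{\phi'}\in C^{k-1}$ (as $\phi'$ is bounded away from $0$) we get $\Psi\in C^k$, so $F$ and $G=\phi\circ F$ are genuine $C^k$ diffeomorphisms of $\R$.

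Finally I would assemble the conclusions. With these $F,G$ the map $\Phi:(s,y)\mapsto(t,x)$ is $C^k$; it factors as a linear isomorphism, the injective map $(P,Q)\mapsto(F(P),G(Q))$ with nonvanishing Jacobian $F'G'$, and another linear isomorphism, hence is a $C^k$ diffeomorphism onto its image, which by the computation above is exactly $\{t\le\sigma(x)\}$ (with $\{s>0\}$ onto $\{t<\sigma(x)\}$). Its restriction to $\{s=0\}$ is a bijection onto $\Sigma$ (because $F$ is onto $\R$) and pulls the induced metric on $\Sigma$ back to $\lambda(0,y)\,dy^2=dy^2$, so it is an isometry of the boundaries. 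And $\lambda(s,y)=F'(y-s)\,G'(y+s)$, being a product of $C^{k-1}$ functions, is $C^{k-1}_{s,y}$.

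The only nonroutine point is ensuring that the ODE yields a \emph{global} diffeomorphism of $\R$ rather than merely a local one: this is precisely where uniform space‑likeness is used, through the two‑sided bound on $\phi'$ that makes $\Psi$ proper. The remaining steps — translating the two boundary conditions into $G=\phi\circ F$ and $F'G'=1$, the regularity bookkeeping $C^k\rightsquigarrow C^{k-1}$, and the injectivity/surjectivity of $\Phi$ — are all direct verifications.
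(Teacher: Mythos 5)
Your proof is correct and takes essentially the same route as the paper: pass to null coordinates, encode $\Sigma$ by a reparametrization of one null direction (your $\phi$ is the inverse of the paper's $h$), reduce the two boundary requirements to a single autonomous ODE $F'(y)^2\,\phi'(F(y))=1$, and use the uniform two-sided bound coming from uniform space-likeness to get a global $C^k$ diffeomorphism. The only (cosmetic) difference is that you solve the ODE explicitly by separation of variables, $F=\Psi^{-1}$ with $\Psi'=\sqrt{\phi'}$, whereas the paper invokes Picard's theorem and then notes $f',g'>\eps$ to conclude surjectivity; your explicit antiderivative makes the regularity bookkeeping and the fact that $F$ maps onto all of $\R$ particularly transparent.
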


\begin{proof}
The natural conformal mapping $\Phi:(s,y)\mapsto (t,x)$ is not proper.  We seek instead one for which
$(s,y)\mapsto\Phi(-s,y)$ is proper, which, from the preceding discussion, must take the form
\begin{align}\label{flipped fg}
x+t &= f(y-s)  \quad\text{and}\quad  x-t=g(y+s)
\end{align}
with increasing functions $f,g \in C^k(\R)$.

As $\Sigma$ is uniformly space-like, $|\sigma'(x)|<1$ uniformly for $x\in \R$.  Thus, by the implicit function theorem, there exists a $C^k$ function $h:\R\to \R$ so that
$$
t=\sigma(x)\quad  \text{if and only if} \quad x+t = h(x-t).
$$
Moreover, $\eps^2 < h'(z)= \frac{ 1+\sigma'(z/2+ h(z)/2)}{ 1-\sigma'(z/2+ h(z)/2)} < \eps^{-2}$ for some $\eps >0$.

The condition that the conformal mapping sends $\{s=0, \, y\in \R\}$ onto $\Sigma$ reads
$$
f(y)=h(g(y)) \qquad \text{for all $y\in\R$,}
$$
while the condition that it is an isometry on the boundary is
$$
f'(y) g'(y) =1  \qquad \text{for all $y\in\R$}.
$$
Therefore, $g$ must solve the following ODE:
$$
g'(y) = \bigl[ h'(g(y)) \bigr]^{-1/2} \quad \text{with, say,} \quad g(0)=0.
$$
(We take the positive square-root.)  By Picard's existence and uniqueness theorem, this equation admits a unique global solution $g\in C^k$ for $h\in C^k$ with $k\geq 2$.
Thus $f=h\circ g \in C^k$, as well.  Moreover  $g'(y)>\eps$ and $f'(y)> \eps$, so we are guaranteed that both are diffeomorphisms onto the whole of $\R$.

A computation shows that the conformal factor is given explicitly by
$$
\lambda(s,y):=f'(y-s)g'(y+s)>0,
$$
which is clearly $C^{k-1}_{s,y}$.  It is now an easy matter to verify that the conformal mapping we constructed satisfies the desired properties.  This completes the proof of the proposition.
\end{proof}

A direct consequence of Proposition~\ref{P:conformal exists} is the following

\begin{corollary}[Reduction to blowup on lines]\label{C:line reduction}
For any $2\leq k\in \N\cup\{\infty\}$, there is a one-to-one correspondence between solutions to \eqref{nlw} that blowup on a $C^k$ uniformly space-like hypersurface in the sense of
\eqref{blowup} and solutions to
\begin{align}\label{mnlw}
v_{ss}   - v_{yy}  - \tfrac{2(p+2)}{p^2} \lambda(s,y) |v|^p v = 0
\end{align}
that blow up on $\{s=0, \, y\in \R\}$ in the sense that
\begin{equation}\label{sy blowup}
\lim_{s\searrow 0}s^{\frac2p} v(s,y) =1
\end{equation}
for each $y\in \R$.  Here, $\lambda(s,y)$ denotes the conformal factor from Proposition~\ref{P:conformal exists}.
\end{corollary}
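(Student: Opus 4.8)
The plan is to take the correspondence to be $u\mapsto v:=u\circ\Phi$, where $\Phi:\{s\geq0,\,y\in\R\}\to\{t\leq\sigma(x)\}$ is the conformal diffeomorphism of Proposition~\ref{P:conformal exists}, with inverse $v\mapsto v\circ\Phi^{-1}$. The first task is to check this carries \eqref{nlw} to \eqref{mnlw}. Writing $\Phi$ in the null form \eqref{flipped fg}, $x+t=f(y-s)$ and $x-t=g(y+s)$, one sees that in the null coordinates $a:=y-s$, $b:=y+s$ on the domain and $\xi:=x+t$, $\zeta:=x-t$ on the target, $\Phi$ is just $\xi=f(a)$, $\zeta=g(b)$, while $\partial_s^2-\partial_y^2=-4\,\partial_a\partial_b$ and $\partial_t^2-\partial_x^2=-4\,\partial_\xi\partial_\zeta$. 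Since $\partial_a=f'(a)\partial_\xi$ and $\partial_b=g'(b)\partial_\zeta$, the chain rule yields the conformal-covariance identity
$$
(\partial_s^2-\partial_y^2)(u\circ\Phi)=f'(y-s)\,g'(y+s)\,\bigl[(\partial_t^2-\partial_x^2)u\bigr]\circ\Phi=\lambda(s,y)\,\bigl[(\partial_t^2-\partial_x^2)u\bigr]\circ\Phi ,
$$
because $\lambda(s,y)=f'(y-s)g'(y+s)$ by Proposition~\ref{P:conformal exists}. As $|v|^pv=(|u|^pu)\circ\Phi$, multiplying through shows $u$ solves \eqref{nlw} near $\Sigma$ inside $\{t\leq\sigma(x)\}$ exactly when $v$ solves \eqref{mnlw} near $\{s=0\}$; and since $\Phi\in C^k$, the regularity of $v$ matches that of $u$.

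It then remains to match the blowup conditions. Fix $y_0$, put $x_0:=x(0,y_0)$, so $\Phi(0,y_0)=(\sigma(x_0),x_0)=:P$, and compare the coordinate $s$ with the vertical distance $\sigma(x)-t$ along the curve $s\mapsto\Phi(s,y_0)$. Differentiating \eqref{flipped fg} at $s=0$ and using the boundary identities $f'g'\equiv1$ and $h'(z)=\tfrac{1+\sigma'}{1-\sigma'}$ recorded in the proof of Proposition~\ref{P:conformal exists}, one finds $g'(y_0)=\bigl(\tfrac{1-\sigma'(x_0)}{1+\sigma'(x_0)}\bigr)^{1/2}$ and $f'(y_0)=g'(y_0)^{-1}$; hence $\partial_s\Phi(0,y_0)$ is parallel to $(-1,-\sigma'(x_0))$, i.e. the curve leaves $\Sigma$ along the Minkowski normal, and $\tfrac{d}{ds}\bigl[\sigma(x(s,y_0))-t(s,y_0)\bigr]\big|_{s=0}=\sqrt{1-\sigma'(x_0)^2}$. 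Consequently $s=(1-\sigma'(x_0)^2)^{-1/2}(\sigma(x)-t)(1+o(1))$ along $\Phi(\cdot,y_0)$, so that
$$
s^{2/p}v(s,y_0)=(1-\sigma'(x_0)^2)^{-1/p}\,\bigl(\sigma(x)-t\bigr)^{2/p}u(t,x)\,\bigl(1+o(1)\bigr)
$$
evaluated along the same curve. Thus \eqref{sy blowup} at $y_0$ is equivalent to $(\sigma(x)-t)^{2/p}u(t,x)\to(1-\sigma'(x_0)^2)^{1/p}$ as $(t,x)\to P$ along the Minkowski normal, the factor $(1-\sigma'(x_0)^2)^{1/p}$ on the right of \eqref{blowup} being produced precisely by the $\sqrt{1-\sigma'^2}$ above.

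The last step is to reconcile this with \eqref{blowup}, which is phrased along the vertical lines $\{x=\text{const}\}$. Since $x(s,y_0)\to x_0$ and $\sigma\in C^1$, the value $(1-\sigma'(x(s,y_0))^2)^{1/p}$ tends to $(1-\sigma'(x_0)^2)^{1/p}$, so the only remaining point is that $(\sigma(x)-t)^{2/p}u(t,x)$ has the same limit at $P$ along the normal curve $\Phi(\cdot,y_0)$ as along the vertical line through $x_0$. I expect this to be the one genuinely delicate step, and the place where one must use more than bare pointwise convergence: it reduces to the mild statement that $(\sigma(x)-t)^{2/p}u(t,x)$ converges to $(1-\sigma'(x)^2)^{1/p}$ locally uniformly in $x$ when the approach to $\Sigma$ is confined to a fixed field of transversal cones — a property easily verified for the Lorentz-boosted model profile to which $u$ is asymptotic (both $\Phi(\cdot,y_0)$ and the vertical lines lie in such cones, since $|\sigma'|$ is bounded below $1$) and preserved under the contraction-mapping construction of Section~\ref{S:construction}. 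Granting it, the vertical and normal limits coincide, so \eqref{blowup} for $u$ is equivalent to \eqref{sy blowup} for $v$, and the same chain of equivalences read backwards shows that a solution $v$ of \eqref{mnlw} satisfying \eqref{sy blowup} yields, via $u=v\circ\Phi^{-1}$, a solution of \eqref{nlw} satisfying \eqref{blowup}. Everything apart from this reconciliation is the chain rule together with identities already obtained in the proof of Proposition~\ref{P:conformal exists}.
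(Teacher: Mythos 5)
Your proposal follows the same route as the paper: push forward by the conformal diffeomorphism of Proposition~\ref{P:conformal exists}, use $\partial_s^2-\partial_y^2=\lambda(s,y)(\partial_t^2-\partial_x^2)$ to convert the equations, and compute the boundary Jacobian of $\Phi$ to match the rate $s$ against $\sigma(x)-t$. Your null-coordinate derivation of the conformal covariance and your reading off of $f'(y_0),g'(y_0)$ to get $\partial_s\Phi(0,y_0)\parallel(-1,-\sigma'(x_0))$ and $\tfrac{d}{ds}[\sigma(x)-t]\big|_{s=0}=\sqrt{1-\sigma'(x_0)^2}$ are exactly the content of \eqref{coresp} in the paper (which, incidentally, carries an extraneous minus sign; both limits there should be $+\sqrt{1-|\sigma'(x)|^2}$).

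The one genuine difference is that you explicitly flag the final reconciliation step: \eqref{blowup} is a limit along vertical lines $\{x=\text{const}\}$, whereas what the conformal change of variables directly yields is a limit along the transversal curves $\Phi(\cdot,y_0)$, and a purely pointwise limit along one family of curves does not automatically transfer to another. The paper does not address this; it establishes \eqref{coresp} and asserts that this suffices. You are right that, taken literally as a bijection between sets of solutions defined by the pointwise conditions \eqref{blowup} and \eqref{sy blowup}, closing this step needs some mild local uniformity of the convergence (e.g.\ along a fixed field of transversal cones), which is automatic for the solutions produced in Section~\ref{S:construction} (where $v=\tilde v+w$ with $s^{2/p}\tilde v\to 1$ locally uniformly and $w$ vanishing to higher order) but is not a consequence of the pointwise statements alone. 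So your observation is a legitimate and useful sharpening rather than a misreading; your proposed fix via non-tangential convergence is the natural one, though you do not carry it out in detail and the paper does not carry it out at all.
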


\begin{proof}
Let $\Phi:(s,y)\mapsto (t,x)$ be the conformal mapping from Proposition~\ref{P:conformal exists}.  A straightforward computation shows that
$$
\partial_s^2 - \partial_y^2 = \lambda(s,y) (\partial_t^2-\partial_x^2),
$$
which means that $v(s,y)=u\circ \Phi(s,y)$ solves \eqref{mnlw} if and only if $u(t,x)$ solves \eqref{nlw}.

To prove the correspondence between the blowup behavior of $v$ on the line $\{s=0, \, y\in \R\}$ and that of $u$ on the hypersurface $\Sigma$, we merely have to show that
\begin{align}\label{coresp}
\lim_{t\nearrow \sigma(x)} \frac{\sigma(x)-t}s = - \sqrt{ 1-|\sigma'(x)|^2 } = \lim_{s\searrow 0} \frac{\sigma(x)-t}s ,
\end{align}
where $(t,x)=\Phi(s,y)$.  In the former limit, $x$ is held fixed; in the latter, $y$ is fixed.  To verify these relations, we need to compute the derivative of $\Phi$ (and its inverse)
at the boundary.

Recall from the proof of Proposition~\ref{P:conformal exists} that our conformal mapping takes the form
$$
t= \tfrac12\bigl[ f(y-s)-g(y+s) \bigr] \quad\text{and}\quad x= \tfrac12\bigl[ f(y-s) + g(y+s) \bigr],
$$
with $f$ and $g$ increasing and obeying
$$
f'(y) = \frac{1}{g'(y)} = \bigl[ h'(g(y)) \bigr]^{1/2} = \bigl[ \tfrac{1+\sigma'(x)}{1-\sigma'(x)} \bigr]^{1/2}
$$
when $x=\tfrac12[ f(y) + g(y)]$, that is, when $s=0$.  Therefore,
\begin{align*}
\begin{bmatrix} \tfrac{\partial t}{\partial s} \, & \tfrac{\partial t}{\partial y}\\[1ex]  \tfrac{\partial x}{\partial s} \, & \tfrac{\partial x}{\partial y}\end{bmatrix}(0,y)
 &= \frac12 \begin{bmatrix} -f'(y)-g'(y) & f'(y)-g'(y) \\[1ex]  -f'(y)+g'(y) & f'(y)+g'(y) \end{bmatrix} \\
 &= \frac{1}{\sqrt{1-|\sigma'(x)|^2}} \begin{bmatrix} -1 & \sigma'(x) \\[1ex]  -\sigma'(x) & 1 \end{bmatrix}
\end{align*}
where, as before,  $(0,y)$ and $(\sigma(x),x)=\Phi(0,y)$ are corresponding points on the boundaries.  The left column of this matrix justifies the second identity
in \eqref{coresp}, while the top left entry of its inverse explains the first identity.  Note that this matrix has determinant $-1$, as can be predicted from
the knowledge that $\lambda=1$ at the boundary and $\Phi$ is orientation-reversing in the first variable and orientation-preserving in the second.
\end{proof}

%%%%%%%%%%%%%%%%%%%%%%%%%%%%%%%%%%%%%%%%%%%%%%%%%%%%%%%%%%%%%%%%%%%%%%%%%%%%%%%%%%%%%%%%%%%
%
%
%                                   Section
%
%
%%%%%%%%%%%%%%%%%%%%%%%%%%%%%%%%%%%%%%%%%%%%%%%%%%%%%%%%%%%%%%%%%%%%%%%%%%%%%%%%%%%%%%%%%%%

\section{Building the parametrix} \label{S:parametrix}
In this section, we further analyze the modified wave equation
\begin{align}\label{modified nlw}
v_{ss}   - v_{yy}  - \tfrac{2(p+2)}{p^2} \lambda(s,y) |v|^p v =0,
\end{align}
where $\lambda$ denotes the conformal factor introduced in the previous section.  We remind the reader that the requirement that the conformal
mapping be an isometry on the boundary translates to $\lambda(0,y) =1$ for all $y\in \R$.  Our goal is to find a good parametrix for \eqref{modified nlw}
that blows up on the line $\{s=0,\, y\in\R\}$.

Let us start by recalling a very simple version of the well-known extension theorems of Whitney and Borel (cf. Theorem~1.2.6 and Corollary~1.3.4 in \cite{Hormander1}):

\begin{lemma}[Whitney/Borel extension]\label{L:WB}
Given $k\in\{0,1,\ldots,\infty\}$ and functions $\rho_j(y)\in C^{k-j}_y(\R)$ for $0\leq j < k+1$, there exists a function $\rho\in C^{k}_{s,y}(\R^2)$ obeying
$$
[\partial^j_s \rho](0,y) = \rho_j(y) \qquad \text{for each} \quad 0\leq j < k+1.
$$
Naturally, if $k=\infty$, then $k-j=k+1=\infty$.
\end{lemma}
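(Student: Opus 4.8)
The plan is to reduce the lemma to the extension theorems of Borel and Whitney referred to in its statement; the only real content is to organize the prescribed data correctly, and I would handle the cases $k=\infty$ and $k<\infty$ separately.

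For $k=\infty$ (the case used for the $C^\infty$ part of Theorem~\ref{T:main}) I would argue directly, following the usual proof of Borel's theorem. Fix $\chi\in C^\infty(\R)$ with $\chi\equiv 1$ near $0$ and $\chi(s)=0$ for $|s|\ge 1$, and set
$$
\rho(s,y):=\sum_{j=0}^{\infty}\frac{s^j}{j!}\,\rho_j(y)\,\chi(\lambda_j s)
$$
for an increasing sequence of constants $\lambda_j\ge 1$ to be chosen. The $j$-th summand is supported in $\{|s|\le\lambda_j^{-1}\}$, and by rescaling each mixed partial $\partial_s^a\partial_y^b$ of it is bounded on a fixed compact set by $C_{j,b}\,\lambda_j^{a-j}$ (the constant depending also on the compact set); once $a\le j-1$ this is at most $C_{j,b}\lambda_j^{-1}$. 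Choosing each $\lambda_j$ large compared with the finitely many constants $C_{i,b}$ with $i\le j$ makes the series, together with all of its termwise derivatives, converge uniformly on compact sets, so $\rho\in C^\infty_{s,y}(\R^2)$ and may be differentiated term by term. Since near $s=0$ each summand is the monomial $\tfrac{s^j}{j!}\rho_j(y)$, the derivative $[\partial_s^i\rho](0,y)$ receives a contribution only from $j=i$, giving $[\partial_s^i\rho](0,y)=\rho_i(y)$.

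When $k<\infty$ this formula breaks down: $\rho_j$ is only $C^{k-j}$, and the product $s^j\rho_j(y)$ need not be jointly $C^k$ (already $s\,g(y)$ fails to be $C^1_{s,y}$ when $g\in C^0\setminus C^1$). Here I would instead invoke Whitney's extension theorem on the closed line $K=\{(0,y):y\in\R\}\subset\R^2$, prescribing the jet field
$$
f^{(a_s,a_y)}(0,y):=\rho_{a_s}^{(a_y)}(y),\qquad a_s+a_y\le k,
$$
which is well defined and continuous precisely because $\rho_{a_s}\in C^{k-a_s}$ while $a_y\le k-a_s$. Whitney's compatibility condition compares the jets at $(0,y)$ and $(0,y+h)$, and since the displacement $(0,h)$ has vanishing $s$-component only the terms differentiated purely in $y$ survive, so the condition collapses to
$$
\rho_{a_s}^{(a_y)}(y+h)=\sum_{l=0}^{k-a_s-a_y}\frac{\rho_{a_s}^{(a_y+l)}(y)}{l!}\,h^l+o\bigl(|h|^{k-a_s-a_y}\bigr),
$$
which is Taylor's formula with Peano remainder for $\rho_{a_s}^{(a_y)}\in C^{k-a_s-a_y}$ (localized by a partition of unity along $K$ to deal with non-compactness). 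Whitney's theorem then produces $\rho\in C^k_{s,y}(\R^2)$ with $\partial_s^{a_s}\partial_y^{a_y}\rho$ restricting to $f^{(a_s,a_y)}$ on $K$; the case $a_y=0$ is the assertion.

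The scaling bookkeeping in the $k=\infty$ case is entirely routine. The one place where the hypothesis is used essentially is the verification that the jet field above is Whitney-admissible, where the match between the regularity $C^{k-a_s}$ of $\rho_{a_s}$ and the order $k-a_s-a_y$ of expansion demanded by the compatibility condition must be respected; as noted, this amounts to Taylor's theorem, so I do not anticipate a real obstacle. Should one prefer to avoid citing Whitney, the same $\rho$ can be built by hand from the series above with each $\rho_j$ replaced by a mollification $\rho_j*\phi_{\psi(s)}$ at a spatial scale $\psi(s)\downarrow 0$ tied to $s$, but the continuity estimates for the mollified pieces are fussier, so I would present the Whitney route.
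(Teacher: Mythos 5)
The paper does not give a proof of this lemma; it is stated as ``a very simple version of the well-known extension theorems of Whitney and Borel'' with a bare citation to H\"ormander. Your proof is correct and supplies precisely the reduction the paper gestures at: the direct Borel-type series with cutoffs $\chi(\lambda_j s)$ for $k=\infty$, and Whitney's $C^k$ extension applied to the jet field $f^{(a_s,a_y)}(0,y)=\rho_{a_s}^{(a_y)}(y)$ on the closed set $\{s=0\}$ for finite $k$. The verification that the Whitney compatibility condition degenerates to the one-variable Taylor--Peano expansion because the displacement $q-p=(0,h)$ kills all monomials with $\beta_s>0$ is exactly the right observation, and the required uniformity of the $o(|h|^{k-a_s-a_y})$ on compact subsets of the line is automatic from uniform continuity of the top derivative $\rho_{a_s}^{(k-a_s)}$ there, so the partition-of-unity remark is a safe but unnecessary precaution. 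In short: correct, and the same approach the paper has in mind.
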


We will use Lemma~\ref{L:WB} to construct the parametrix when $\frac 4p$ is not an integer.  The remaining case is
slightly more involved and will be addressed afterwards.

\begin{proposition}[Parametrix]\label{P:parametrix}
Let $p>0$ such that $\frac4p$ is not an integer.  Fix $2\leq k\in \N\cup\{\infty\}$ and $\lambda\in C^{k-1}_{s,y}([0,\infty)\times\R)$ such that $\lambda(0,y) =1$ for all $y\in \R$.
Then there exists a nowhere-vanishing parametrix $\tilde v\in C^{k-1}_{s,y}((0,\infty)\times\R)$ for the equation \eqref{modified nlw} whose failure to satisfy the equation can be described as follows:
setting
$$
\tE:=\tilde v_{ss}   - \tilde v_{yy}  - \tfrac{2(p+2)}{p^2} \lambda(s,y) |\tilde v|^p \tilde v,
$$
we have
\begin{align}\label{accuracy}
\partial_s^\alpha \partial_y^\beta \tE(s,y) = o\bigl( s^{-\frac2p+k-3-\alpha} \bigr) \quad \text{as} \quad s\to 0,
\end{align}
for all $y\in \R$ and for all $0\leq \alpha+ \beta<k$.  When $k=\infty$, this means that the left-hand side of \eqref{accuracy} vanishes to all orders as $s\to 0$.
Moreover, $\tilde v$ blows up on the line $\{s=0,\, y\in\R\}$ in the sense that
\begin{equation}\label{tv as s 0}
\lim_{s\searrow 0}s^{\frac2p} \tilde v(s,y) =1 \quad\text{for all} \quad y\in \R.
\end{equation}
\end{proposition}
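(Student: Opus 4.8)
The plan is to look for the parametrix in the self-similar form $\tilde v(s,y)=s^{-2/p}\,w(s,y)$, which reduces the problem to constructing a function $w$, positive and with $w(0,\cdot)\equiv 1$, whose failure to solve \eqref{modified nlw} is flat at $s=0$. Since $\lambda(0,\cdot)\equiv 1$ and (for $s$ near $0$) $w>0$, so that $|\tilde v|^p\tilde v=s^{-2/p-2}w^{p+1}$, a direct differentiation gives $\tE=s^{-2/p-2}\,B[w]$, where
\[
B[w]:=\tfrac{2(p+2)}{p^2}\bigl(w-\lambda\, w^{p+1}\bigr)-\tfrac4p\, s\, w_s + s^2 w_{ss} - s^2 w_{yy}.
\]
The precise coefficient $2(p+2)/p^2$ in the nonlinearity is exactly what makes $B[1]\big|_{s=0}=\tfrac{2(p+2)}{p^2}(1-1)=0$: it encodes that $s^{-2/p}$ solves the $\lambda\equiv 1$, $y$-independent model exactly. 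Thus the task is to choose $w$, perturbing the constant $1$, so that the Taylor jet of $B[w]$ at $s=0$ vanishes to order $k-1$ — this is the $\alpha=\beta=0$ case of \eqref{accuracy} — with enough control on the $y$-derivatives.

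First I would solve formally for the $s$-Taylor coefficients $a_n(y):=\tfrac1{n!}\partial_s^n w(0,y)$, with $a_0\equiv 1$, by collecting powers of $s$ in $B[w]$. Using that $w\mapsto w^{p+1}$ has a convergent binomial expansion about $w=1$ and that $\lambda$ has $s$-Taylor coefficients in $C^{k-1-j}_y$, the vanishing of the coefficient of $s^n$ reads
\[
c_n\, a_n + \Phi_n\bigl(a_1,\dots,a_{n-1};\ \text{$s$-jet of }\lambda;\ \partial_y^2 a_{n-2}\bigr)=0,
\qquad
c_n := n(n-1)-\tfrac4p\, n-\tfrac{2(p+2)}{p}=(n+1)\bigl(n-2-\tfrac4p\bigr).
\]
Since $c_n$ vanishes only for $n\in\{-1,\,2+\tfrac4p\}$, the hypothesis $\tfrac4p\notin\N$ guarantees $c_n\neq 0$ for every integer $n\geq 1$, so the recursion determines each $a_n$ uniquely from previously computed data. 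A straightforward induction — in which the only loss of $y$-regularity comes from the term $\partial_y^2 a_{n-2}$ produced by $-s^2 w_{yy}$, and is matched exactly by the $C^{k-1-n}_y$ regularity of the $n$-th $s$-derivative of $\lambda$ — shows $a_n\in C^{k-1-n}_y(\R)$ (all $C^\infty$ when $k=\infty$). One needs the coefficients up through $n=k-1$.

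Next I would invoke the Whitney/Borel extension, Lemma~\ref{L:WB}, applied to $(n!\,a_n)_{0\le n<k+1}$, to produce $w\in C^{k-1}_{s,y}$ with $\partial_s^n w(0,\cdot)=n!\,a_n$; the standard scale-adapted mollification construction makes the same $w$ moreover $C^\infty$ on $\{s>0\}$, with $y$- and higher-order $s$-derivatives beyond order $k-1$ blowing up no faster than the corresponding negative power of $s$ as $s\to 0$ — precisely the structure that lets one differentiate $\tE$. Because $a_0\equiv 1$, multiplying $w$ by a cutoff equal to $1$ near $s=0$ and adjusting it away from $\{s=0\}$ gives a $w$ bounded away from $0$, so $\tilde v:=s^{-2/p}w\in C^{k-1}_{s,y}((0,\infty)\times\R)$ is nowhere vanishing and $\lim_{s\searrow 0}s^{2/p}\tilde v=w(0,\cdot)=1$, which is \eqref{tv as s 0}. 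Finally, by construction the first $k$ $s$-Taylor coefficients of $B[w]$ vanish, so $\partial_s^\gamma\partial_y^\beta B[w]=o(s^{\,k-1-\gamma})$ as $s\to 0$ for $\gamma+\beta<k$; feeding this into $\tE=s^{-2/p-2}B[w]$ and applying the Leibniz rule yields \eqref{accuracy}. When $k=\infty$ the coefficients are prescribed to all orders and $\tE$ vanishes to infinite order.

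The step I expect to be the main obstacle is the regularity bookkeeping: one must check that the two-derivatives-at-a-time loss along the recursion never outruns the $C^{k-1-j}_y$ regularity available in the jet of $\lambda$, so that Lemma~\ref{L:WB} applies with the right number of coefficients, and at the same time that the resulting $w$ (hence $B[w]$) is smooth enough on $\{s>0\}$ and its jet flat enough at $s=0$ that every derivative occurring in \eqref{accuracy} is genuinely defined and decays at the asserted rate — the $s^2$ weights on the second-derivative terms of $B$ are what make this possible. This is also exactly where $\tfrac4p\notin\N$ enters: it is the condition that keeps the indicial factors $c_n$ away from zero; in the excluded case $c_{2+4/p}=0$ and a logarithmic term must be inserted into the expansion, which is why that case is treated separately.
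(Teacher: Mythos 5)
Your proposal reproduces the paper's argument: the same ansatz $\tilde v = s^{-2/p}\rho$ (your $w$), the same recursive determination of the $s$-Taylor jet of $\rho$ at $s=0$ driven by the non-vanishing of the indicial factor (your $c_n=(n+1)(n-2-\tfrac4p)$ is exactly the paper's $\tfrac{j+3}{(j+2)!}(j-\tfrac4p)$ after the shift $n=j+2$ and the change of normalization $\rho_j=j!\,a_j$), the same use of Lemma~\ref{L:WB} to realize the jet, and the same observation that $\rho_0\equiv 1$ lets one arrange a nowhere-vanishing extension. The regularity bookkeeping $a_n\in C^{k-1-n}_y$ and the conclusion via Leibniz also match the paper, so this is essentially the same proof.
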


\begin{proof}
We start by noting that if $\lambda\equiv 1$, then there is a well-known solution to \eqref{modified nlw} that blows up on the line $\{s=0,y\in\R\}$, namely, $v(s,y)=s^{-2/p}$.  Therefore, we will look for a parametrix of the form $\tilde v(s,y) = s^{-2/p} \rho(s,y)$ with $\rho(0,y)\equiv1$.  The condition that $\tilde v$ satisfies \eqref{modified nlw} to the level of accuracy described in \eqref{accuracy} translates to the following condition for $\rho$:
\begin{align}\label{accuracy rho}
\partial_s^\alpha \partial_y^\beta \mathcal E(s,y) = o( s^{k-3-\alpha}),
\end{align}
uniformly in $y\in \R$ and for all $0\leq \alpha+ \beta<k$, where
\begin{equation}\label{rho eqn}
\mathcal E :=\rho_{ss} - \rho_{yy} - \tfrac4p s^{-1}\rho_s + \tfrac{2(p+2)}{p^2} s^{-2} [\rho- \lambda |\rho|^p\rho] .
\end{equation}
This in turn corresponds to conditions on $\rho_j(y):= [\partial^j_s \rho](0,y)$ for $0\leq j< k$.

We first seek to satisfy \eqref{accuracy rho} in the case $\alpha=\beta=0$; that \eqref{accuracy rho} continues to hold for all $0\leq \alpha+ \beta<k$ will be an immediate
consequence of our construction.  As $\rho(0,y)\equiv1$, we must have $\rho_0(y)\equiv 1$, which ensures cancellation of the $O(s^{-2})$ term on the left-hand side of
\eqref{accuracy rho} when $\alpha=\beta=0$.  To continue, we expand $\lambda$ in Taylor series near $s=0$ to maximal order,
$$
\lambda(s,y) = 1 +s \partial_s\lambda(0,y) + \cdots+ \frac{s^{k-1}}{(k-1)!} \partial_s^{(k-1)}\lambda(0,y) + o(s^{k-1}),
$$
and we substitute this into \eqref{rho eqn}.  With a little computation, one finds the following coefficients for
$s^{j}$ on the left-hand side of \eqref{accuracy rho} (in the case $\alpha=\beta=0$):
\begin{equation*}
\begin{cases}
0 &\quad \text{if } j=-2\\
-\tfrac{2(p+4)}p \rho_1(y) -\tfrac{2(p+2)}p \partial_s \lambda(0,y) &\quad \text{if } j=-1\\
\frac{j+3}{(j+2)!}\bigl(j-\tfrac4p\bigr)\rho_{j+2}(y) - \frac1{j!}\partial_y^2 \rho_j(y) + P_j &\quad \text{if } j\geq 0,
\end{cases}
\end{equation*}
where $P_j$ is a polynomial in $\partial_s\lambda(0,y), \ldots, \partial_s^{j+2}\lambda(0,y)$ and $\rho_1(y), \ldots, \rho_{j+1}(y)$.

To zero the coefficient of $s^{-1}$, we simply choose
$$
\rho_1(y):=-\tfrac{p+2}{p+4} \partial_s \lambda(0,y) \in C^{k-2}_y.
$$

More generally, since $4/p$ is not an integer, the factor in front of $\rho_{j+2}(y)$ is non-zero and so we may recursively choose this function to annihilate the
$O(s^j)$ term.  Simple induction shows that $\rho_j \in C^{k-j-1}$.  The continuation of this procedure is limited only by the regularity of $\lambda$.  More precisely,
when $k$ is finite, the process terminates with the choice of $\rho_{k-1}\in C^0(\R)$ to eliminate the $O(s^{k-3})$ term on the right-hand side of \eqref{rho eqn}.

By Lemma~\ref{L:WB}, there exists $\rho\in C^{k-1}_{s,y} ([0, \infty)\times\R)$ which matches our Taylor expansion at $s=0$.  This guarantees that \eqref{accuracy rho} holds,
which in turn shows that the parametrix $\tilde v\in C^{k-1}_{s,y} ((0, \infty)\times\R)$ satisfies \eqref{accuracy} and \eqref{tv as s 0}.

To see that the parametrix $\tilde v$ can be chosen to be nowhere-vanishing, we note that as $\rho_0(y)\equiv 1$, the extension $\rho(s,y)$ can be chosen to be nowhere-vanishing, indeed, bounded from below by a positive value.  This can been seen quite directly from the proof of Lemma~\ref{L:WB} or by post facto modification of any other extension.

This completes the proof of the proposition.
\end{proof}

The proof above breaks down in the case $4/p$ is an integer: it is impossible to cancel the $O(s^{4/p})$ term in
$\mathcal E$ because the coefficient of $\rho_{2+4/p}$ vanishes.  This phenomenon is well-known in the context of
power-series solutions to linear ODEs and our treatment of this case is adapted from that model.  For simplicity of exposition,
we will only consider the case where $\lambda$ is $C^\infty_{s,y}$.

To construct the parametrix in the case when $\frac4p$ is an integer (and the conformal factor is smooth), we will rely
on the following analogue of the Borel extension lemma, whose proof is easily adapted from that of \cite[Theorem~1.2.6]{Hormander1} cited earlier:

\begin{lemma}[Borel extension]\label{L:Bex}
Given $C^\infty_y(\R)$ functions $\rho_{j,\ell}(y)$ for integers $0\leq j<\infty$ and $0\leq \ell\leq L(j)<\infty$, there is a function $\rho\in C^{\infty}_{s,y}((0,\infty)\times\R)$ obeying
$$
\partial_s^\alpha \partial_y^\beta \Bigl[ \rho(s,y) - \sum_{j=0}^J \sum_{\ell=0}^{L(j)} \frac{\rho_{j,\ell}(y)}{j!} s^{j} \log^\ell(s) \Bigr] = O( s^{J+1-\alpha-\eps} ) \quad\text{as}\quad s\searrow 0
$$
for any $0\leq \alpha,\beta,J < \infty$ and any $\eps>0$.
\end{lemma}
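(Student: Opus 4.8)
The plan is to mimic H\"ormander's proof of the classical Borel lemma (\cite[Theorem~1.2.6]{Hormander1}), inserting the logarithmic factors and absorbing the loss they cause into the $\eps$ that already appears in the statement. Fix once and for all a cutoff $\chi\in C_c^\infty(\R)$ with $\chi\equiv 1$ on $[-1,1]$ and $\chi\equiv 0$ outside $[-2,2]$, and set
$$
\rho(s,y):=\sum_{j=0}^\infty\ \sum_{\ell=0}^{L(j)} \chi(\lambda_j s)\,\frac{\rho_{j,\ell}(y)}{j!}\, s^{j}\log^{\ell}(s),
$$
where $\lambda_0\le\lambda_1\le\cdots\to\infty$ is a sequence of positive numbers to be chosen. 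On any compact subset of $(0,\infty)\times\R$ the variable $s$ is bounded away from $0$, so $\chi(\lambda_j s)=0$ once $\lambda_j>2/s$; hence the series is locally finite there and defines a $C^\infty_{s,y}((0,\infty)\times\R)$ function no matter how the $\lambda_j$ are selected. Thus all of the content of the lemma lies in the behaviour as $s\searrow 0$.

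The main step is then to split, for fixed $J\ge0$,
$$
\rho(s,y)-\sum_{j=0}^{J}\ \sum_{\ell=0}^{L(j)}\frac{\rho_{j,\ell}(y)}{j!}s^{j}\log^{\ell}(s)
=\sum_{j>J}\ \sum_{\ell=0}^{L(j)}\chi(\lambda_j s)\,\tfrac{\rho_{j,\ell}(y)}{j!}s^{j}\log^{\ell}(s)
+\sum_{j\le J}\ \sum_{\ell=0}^{L(j)}\bigl(\chi(\lambda_j s)-1\bigr)\tfrac{\rho_{j,\ell}(y)}{j!}s^{j}\log^{\ell}(s).
$$
The last ("head correction") sum is finite, and its $j$-th block is supported in $\{s\ge 1/\lambda_j\}$, so the entire sum together with all of its derivatives vanishes identically for $s<1/\lambda_J$; it is therefore $O(s^{\infty})$. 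For the first ("tail") sum, note that its $j$-th block is supported in $\{s\le 2/\lambda_j\}$ and that $\partial_s$ acts on it like an honest $s$-derivative: differentiating $s^{j}\log^{\ell}s$ yields $s^{j-1}$ times a polynomial in $\log s$ of degree $\le\ell$ (no extra negative power of $s$ is produced), while differentiating $\chi(\lambda_j s)$ yields $\lambda_j\chi'(\lambda_j s)$, supported where $\lambda_j\le 2/s$ and hence $O(1/s)$. Iterating with the Leibniz rule, on the support of the $j$-th block one obtains, for suitable constants $C_{\alpha,j}$,
$$
\Bigl|\partial_s^{\alpha}\partial_y^{\beta}\bigl[\chi(\lambda_j s)\tfrac{\rho_{j,\ell}(y)}{j!}s^{j}\log^{\ell}s\bigr]\Bigr|
\le C_{\alpha,j}\,\frac{|\partial_y^{\beta}\rho_{j,\ell}(y)|}{j!}\,s^{j-\alpha}\,|\log s|^{L(j)},
$$
and since $j>J$ gives $s^{j-\alpha}\le s^{J+1-\alpha}(2/\lambda_j)^{\,j-J-1}$ on the support, while $|\log s|^{L(j)}\le C_{\eps,j}\,s^{-\eps}$ for small $s$, this block is $\le C_{\alpha,\beta,\eps,j}\,(2/\lambda_j)^{\,j-J-1}\,s^{J+1-\alpha-\eps}$ there.

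It then remains to choose the $\lambda_j$, and here the usual diagonal device does the job. There are only countably many requirements to satisfy, one for each quadruple $(\alpha,\beta,J,\eps)$ with $\alpha,\beta,J\in\N\cup\{0\}$ and $\eps\in\{1,\tfrac12,\tfrac13,\dots\}$; moreover, exactly as in \cite{Hormander1}, for each such requirement it suffices to control $y$ in a fixed compact set, say $|y|\le J$, since the $\partial_y^{\beta}\rho_{j,\ell}$ need not be globally bounded. Enumerating these requirements and proceeding inductively, one chooses $\lambda_j$ so large — depending on $j$ and on $\max_{0\le\ell\le L(j)}\sup_{|y|\le j}|\partial_y^{\beta}\rho_{j,\ell}(y)|$ for the finitely many $\beta\le j$ — that the $j$-th tail block contributes at most $2^{-j}s^{J+1-\alpha-\eps}$ to each of the first $j$ requirements. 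Then for every requirement the corresponding tail sum converges geometrically and, together with its finitely many leftover blocks, is $O(s^{J+1-\alpha-\eps})$ for small $s$, uniformly on $|y|\le J$; combined with the bound on the head correction this gives exactly the assertion of the lemma. The only genuine departure from the textbook argument — and hence the only step needing care rather than rote copying — is the presence of the $\log^{\ell}s$ factors: one must verify that $s$-differentiation of $s^{j}\log^{\ell}s$ lowers the power of $s$ by exactly one (so the cutoffs $\chi(\lambda_j s)$ keep behaving like mollified indicators of $\{s\lesssim 1/\lambda_j\}$ after any number of derivatives), and that the resulting polylogarithmic growth is swallowed by an arbitrarily small power $s^{-\eps}$, which is precisely why the conclusion is stated with the $\eps$-loss rather than as $O(s^{J+1-\alpha})$. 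Once these are recorded, the convergence and diagonalization go through verbatim as in \cite{Hormander1}.
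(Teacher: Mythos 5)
The paper does not prove this lemma; it simply asserts that the proof is "easily adapted from that of \cite[Theorem~1.2.6]{Hormander1}." Your proposal supplies exactly that adaptation, and the overall strategy — cutoffs $\chi(\lambda_j s)$ applied to the summands $\tfrac{\rho_{j,\ell}(y)}{j!}s^j\log^\ell s$, a head/tail split, the observation that $\partial_s$ lowers the power of $s$ by exactly one while only raising the polylogarithm's degree, absorption of $|\log s|^{L(j)}$ into $s^{-\eps}$, and a diagonal choice of $\lambda_j$ against a countable enumeration of requirements with compact $y$-exhaustion — is correct and is precisely what the authors have in mind.

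One small inaccuracy worth fixing: you claim that by taking $\lambda_j$ large one can make "the $j$-th tail block contribute at most $2^{-j}s^{J+1-\alpha-\eps}$ to each of the first $j$ requirements." For the boundary block $j=J+1$ your own estimate gives $(2/\lambda_j)^{j-J-1}=(2/\lambda_j)^0=1$, so that block's contribution is independent of $\lambda_j$ and cannot be forced below $2^{-j}$. This does not break the argument — the boundary block is a single term of size $C_{\alpha,\beta,\eps,J+1}\,s^{J+1-\alpha-\eps}=O(s^{J+1-\alpha-\eps})$ and can be lumped in with the "finitely many leftover blocks" — but the inductive prescription as literally stated cannot be carried out. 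Either arrange the enumeration so the constraints imposed at stage $j$ involve only requirements with $J\le j-2$, or simply exempt the boundary block $j=J+1$ from the $2^{-j}$ constraint and observe it is $O(s^{J+1-\alpha-\eps})$ by itself. With that adjustment the argument is complete.
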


\begin{proposition}[Parametrix]\label{P:parametrix'}
Suppose $\frac4p\in\N$ and $\lambda\in C^{\infty}_{s,y}([0,\infty)\times\R)$ with $\lambda(0,y)\equiv 1$. Then there is
a nowhere-vanishing parametrix $\tilde v\in C^{\infty}_{s,y}((0,\infty)\times\R)$ for the equation \eqref{modified nlw}
such that \eqref{tv as s 0} holds and
$$
\tE:=\tilde v_{ss}   - \tilde v_{yy}  - \tfrac{2(p+2)}{p^2} \lambda(s,y) |\tilde v|^p \tilde v
$$
along with all of its derivatives vanish to infinite order as $s\to0$.
\end{proposition}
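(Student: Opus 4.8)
We follow the strategy of the proof of Proposition~\ref{P:parametrix}, now allowing the logarithmic terms that a resonant indicial root forces into the expansion, just as one does for power-series solutions of linear ODEs at a regular singular point. As before we seek $\tilde v(s,y)=s^{-2/p}\rho(s,y)$ with $\rho(0,y)\equiv1$; a direct computation shows $\tE=s^{-2/p}\mathcal E$, where $\mathcal E$ is the operator in \eqref{rho eqn}, so it suffices to build a nowhere-vanishing $\rho\in C^\infty_{s,y}((0,\infty)\times\R)$ for which $\mathcal E$ and all of its derivatives vanish to infinite order as $s\to0$. We construct $\rho$ as a $C^\infty$ function whose asymptotic expansion at $s=0$ is a formal series in $s$ and $\log s$,
$$
\rho(s,y)\ \sim\ \sum_{j\geq 0} s^j\sum_{\ell=0}^{L(j)}\frac{\rho_{j,\ell}(y)}{j!}\,\log^\ell(s),\qquad \rho_{0,0}\equiv1,\quad \rho_{0,\ell}\equiv0\ (\ell\geq1),
$$
with each $\rho_{j,\ell}\in C^\infty_y(\R)$ and each $L(j)<\infty$. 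Given such a formal solution, Lemma~\ref{L:Bex} manufactures an actual $\rho$ with this expansion; since its $s^0$-coefficient is $1$, exactly as in Proposition~\ref{P:parametrix} we may arrange that $\rho$ is bounded below by a positive constant near $s=0$ and hence, after a harmless modification away from $s=0$, nowhere-vanishing. On the region where $\rho>0$ we may replace $|\rho|^p\rho$ by $\rho^{p+1}$ and, in the formal computation below, expand $\rho^{p+1}$ in powers of $\rho-1$.

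It remains to produce the formal solution. Linearizing $\mathcal E$ about $\rho\equiv1$ and using $\lambda(0,y)\equiv1$ isolates the ``Euler'' operator
$$
L_0:=\partial_s^2-\tfrac4p s^{-1}\partial_s-\tfrac{2(p+2)}{p}s^{-2},
$$
and one computes
$$
L_0\bigl(s^j\log^\ell s\bigr)=s^{j-2}\Bigl[I(j)\log^\ell s+\ell\bigl(2j-1-\tfrac4p\bigr)\log^{\ell-1}s+\ell(\ell-1)\log^{\ell-2}s\Bigr],
$$
where $I(j):=j^2-\bigl(1+\tfrac4p\bigr)j-\tfrac{2(p+2)}{p}=(j+1)\bigl(j-2-\tfrac4p\bigr)$ is the indicial polynomial. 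Since $\tfrac4p\in\N$, $I$ vanishes at exactly one integer $j\geq1$, namely $j_*:=2+\tfrac4p$. Substituting the ansatz into $\mathcal E=0$ and equating the coefficient of $s^{j-2}\log^\ell s$ to zero, we determine the $\rho_{j,\ell}$ recursively in $j$ and, at each fixed $j$, downward in $\ell$: the inhomogeneous terms at stage $j$ involve only the previously chosen $\rho_{j',\ell'}$ with $j'<j$, the operator $\partial_y^2$ applied to such, the Taylor coefficients of $\lambda$ at $s=0$, and the (formal) Taylor coefficients of $\rho^{p+1}$ in $\rho-1$ --- all $C^\infty$ in $y$. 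For $j\neq j_*$ the leading factor $I(j)$ is nonzero, so every $\rho_{j,\ell}$ is determined, exactly as in Proposition~\ref{P:parametrix}. At the resonant order $j=j_*$ the pure power $\rho_{j_*,0}$ drops out of the equation (we set it to $0$), and we cancel the obstruction with a single $\log s$ term: the relevant coefficient in $L_0(s^{j_*}\log s)$ is $2j_*-1-\tfrac4p=3+\tfrac4p\neq0$, so $\rho_{j_*,1}$ is determined (and $L(j_*)=1$ suffices). This logarithm propagates to higher orders, through the $\partial_y^2$-coupling and, via $\rho^{p+1}$, through products, thereby raising the log-degree; but at order $s^j$ only the powers $(\rho-1)^n$ with $n\leq j$ contribute, and each factor carries log-degree no larger than at its own order, so $L(j)$ stays finite (one may take $L(j)\leq\lfloor j/j_*\rfloor$). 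The second indicial root $j=-1$ plays no role, the series starting at $j=0$.

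This gives the formal solution. Lemma~\ref{L:Bex} then produces $\rho\in C^\infty_{s,y}((0,\infty)\times\R)$ matching it to every order, nowhere-vanishing as arranged, with $\lim_{s\searrow0}s^{2/p}\tilde v(s,y)=\lim_{s\searrow0}\rho(s,y)=1$, which is \eqref{tv as s 0}. Because $\rho$ agrees with the formal solution to all orders, $\mathcal E$ --- and hence $\tE=s^{-2/p}\mathcal E$ --- together with all of its derivatives is $O(s^N)$ for every $N$ as $s\to0$; that is, $\tE$ vanishes to infinite order.

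The part that requires care is purely combinatorial: checking that the recursion closes, i.e.\ that every $\rho_{j,\ell}(y)$ is genuinely determined with only the expected one-parameter ambiguity at $j=j_*$ (removed by hand), and --- essential for applying Lemma~\ref{L:Bex} --- that only finitely many powers of $\log s$ occur at each finite order in $s$, notwithstanding the mixing produced by the nonlinearity $\rho^{p+1}$. Everything else (the identity $\tE=s^{-2/p}\mathcal E$, the nowhere-vanishing property, and the blowup rate \eqref{tv as s 0}) goes exactly as in the non-resonant Proposition~\ref{P:parametrix}.
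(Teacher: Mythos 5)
Your proof is correct and follows essentially the same route as the paper: the ansatz $\tilde v = s^{-2/p}\rho$ with a formal expansion of $\rho$ in powers of $s$ and $\log s$, recursive determination of the coefficients with a single logarithm forced at the resonant order $j_* = 2 + \tfrac4p$ (where you set the free coefficient $\rho_{j_*,0}\equiv 0$ just as the paper does by omission), and Lemma~\ref{L:Bex} to realize the formal series by an actual $C^\infty$ function that is bounded below near $s=0$. Your framing via the Euler operator and its indicial polynomial $I(j)=(j+1)(j-2-\tfrac4p)$, the explicit remark that at each fixed $j$ one must solve downward in $\ell$, and the sharper bound $L(j)\le \lfloor j/j_*\rfloor$ (versus the paper's $L(j)=O(j)$) are clean clarifications, but the argument is the same.
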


\begin{proof}
Consider what happens when we substitute
$$
\rho = \sum_{j=0}^{1+\frac4p} \frac{\rho_{j,0}(y)}{j!} s^j + \frac{\rho_{2+\frac4p,1}(y)}{(2+\frac4p)!} s^{2+\frac4p} \log(s)
$$
into \eqref{rho eqn}.  To make the coefficients of $s^\ell$, for $-2\leq \ell<\frac4p$, in the resulting expansion vanish, we choose $\rho_{j,0}$ to be the functions $\rho_j$ from
Proposition~\ref{P:parametrix} for all $0\leq j\leq 1+\frac4p$.  Moreover, the coefficient of $s^{4/p}\log(s)$ vanishes for the same reason that the coefficient of $\rho_{2+4/p}$ vanished in the proof of Proposition~\ref{P:parametrix}.  It is possible to choose $\rho_{2+4/p,1}$ to cancel the coefficient of $s^{4/p}$, because this coefficient is now
$$
\frac{(3+\frac4p)\rho_{2+\frac4p,1}(y)}{(2+\frac4p)!} - \frac{\partial_y^2 \rho_{\frac4p,0}(y)}{(\frac4p)!} + P_{\frac4p},
$$
where, just as in the preceding proof, $P_{4/p}$ is a polynomial in $\rho_1(y), \ldots, \rho_{1+4/p}(y)$, as well as the first $2+4/p$ derivatives of $\lambda$.

To continue, we recursively choose coefficients in the expansion of the type given in Lemma~\ref{L:Bex} to construct our full parametrix.  The success of this procedure
relies on the fact that for $j>4/p$, one can choose $\rho_{j+2,\ell}$ to cancel the term of order $s^{j} \log^\ell(s)$ in the expansion of $\mathcal E$, where $\mathcal E$ is as in \eqref{rho eqn}.  To see that this is indeed possible, we note that the coefficient at this order is
$$
\frac{j+3}{(j+2)!}\bigl(j-\tfrac4p\bigr)\rho_{j+2,\ell}(y) - \frac1{j!}\partial_y^2 \rho_{j,\ell}(y) + P_{j,\ell}
$$
where $P_{j,\ell}$ is a polynomial in the Taylor coefficients of $\lambda$ and lower-order coefficients in the expansion of $\rho$.  By lower-order coefficients we mean
$\rho_{j',\ell'}$ with $j'<j$ or $j=j'$ and $\ell'<\ell$.  Note also that for our application, $L(j)=0$ for $0\leq j\leq 1+\frac4p$ and more generally, $L(j)=O(j)$ for any $0\leq j<\infty$.

That $\rho(s,y)$, and hence the parametrix $\tilde v$, can be chosen to be nowhere-vanishing follows from the fact that $\rho_{0,0}(y)\equiv 1$ and $L(0)=0$.  This completes the proof of the proposition.
\end{proof}

%%%%%%%%%%%%%%%%%%%%%%%%%%%%%%%%%%%%%%%%%%%%%%%%%%%%%%%%%%%%%%%%%%%%%%%%%%%%%%%%%%%%%%%%%%%
%
%
%                                   Section
%
%
%%%%%%%%%%%%%%%%%%%%%%%%%%%%%%%%%%%%%%%%%%%%%%%%%%%%%%%%%%%%%%%%%%%%%%%%%%%%%%%%%%%%%%%%%%%

\section{Basic estimates for a singular wave equation} \label{S:estimates}

In this section, we prove finite speed of propagation and basic estimates for solutions to the critically singular wave equation
\begin{align}\label{singular eq}
v_{ss} - v_{yy} - (\nu^2-\tfrac14) s^{-2} v = F.
\end{align}
In our applications, $\nu=3/2 + 2/p$.  The reason behind the appearance of the singular perturbation will become apparent in Section~\ref{S:construction}; see \eqref{w eq}.

\begin{proposition}[Estimates for \eqref{singular eq}]\label{P:dispersive}
Fix $\nu\geq\frac12$ and let $k(s,y;s_0)$ be the causal fundamental solution to \eqref{singular eq}, that is,
\begin{align}\label{k eq}
k_{ss} - k_{yy} - (\nu^2-\tfrac14) s^{-2} k = \delta(s-s_0) \delta(y) \quad \text{for } s,s_0\in (0, \infty) \text{ and } y\in \R
\end{align}
with $k(s,y;s_0)\equiv 0$ for $s<s_0$.  Then $k$ obeys finite speed of propagation, namely,
\begin{align}\label{fsp}
k(s,y;s_0)\equiv 0 \quad \text{when} \quad  |y|>s-s_0,
\end{align}
and satisfies
\begin{align}\label{k bound 0}
\bigl\|k(s,y;s_0)\bigr\|_{L^1_y(\R)} \lesssim (s-s_0) \Bigl(\frac{s}{s_0}\Bigr)^{\nu-\frac12}.
\end{align}
Moreover, $\partial_{s_0} k$ and $\partial_{s} k$ are finite measures with total variation
\begin{align}\label{k bound 1}
\bigl\|\partial_{s_0} k(s,y;s_0)\bigr\|_{M_y} + \bigl\|\partial_{s} k(s,y;s_0)\bigr\|_{M_y}&\lesssim 1 + \frac{s-s_0}{s_0} \Bigl(\frac{s}{s_0}\Bigr)^{\nu-\frac12},
\end{align}
and, when $\nu\geq \frac32$,
\begin{align}\label{k bound 2}
\bigl\|\partial_{s} k(s,y;s_0)\bigr\|_{M_y} &\lesssim 1 + \frac{s-s_0}{s} \Bigl(\frac{s}{s_0}\Bigr)^{\nu-\frac12}.
\end{align}
\end{proposition}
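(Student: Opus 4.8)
The plan is to construct $k$ by Duhamel iteration off the free wave equation in one space dimension and then to read off, or bootstrap, each of \eqref{fsp}--\eqref{k bound 2} from that representation.

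\emph{Step 1: representation, positivity, finite speed of propagation.} Let $E(s,y):=\tfrac12\mathbf 1_{\{|y|<s\}}$ be the forward fundamental solution of $\partial_s^2-\partial_y^2$, let $k_0(s,y):=E(s-s_0,y)$, and let $*$ denote space--time convolution, which by causality is restricted to $\{s_0<s'<s\}$. Then \eqref{k eq} together with the condition that $k\equiv0$ for $s<s_0$ is equivalent to the integral equation $k=k_0+(\nu^2-\tfrac14)\,E*\bigl((s')^{-2}k\bigr)$. Since $(s')^{-2}$ is bounded on each strip $s_0\le s'\le S$, a standard Volterra-type estimate shows that the Neumann series $k=\sum_{n\ge0}k_n$, with $k_{n+1}:=(\nu^2-\tfrac14)\,E*\bigl((s')^{-2}k_n\bigr)$, converges in $C\bigl([s_0,S];L^1_y\bigr)$ for every finite $S$ and defines the causal fundamental solution. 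Two facts follow at once: since $\nu\ge\tfrac12$, every $k_n$ is nonnegative, hence $k\ge0$; and convolution with $E$ preserves support in forward light cones, so each $k_n$, and therefore $k$, is supported in $\{|y|\le s-s_0\}$, which is \eqref{fsp}. (Alternatively, \eqref{fsp} follows from a standard energy inequality in truncated backward light cones, since the potential is bounded on $s\ge s_0>0$.)

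\emph{Step 2: the $L^1_y$ bound.} As $k\ge0$ has compact support in $y$, we have $\|k(s,\cdot;s_0)\|_{L^1_y}=\int_\R k(s,y;s_0)\,dy=:m(s)$. Integrating \eqref{k eq} in $y$ and using \eqref{fsp} to kill the $\partial_y^2$ term, $m$ solves $m''-(\nu^2-\tfrac14)s^{-2}m=\delta(s-s_0)$ with $m\equiv0$ for $s<s_0$. Since $s^{\frac12\pm\nu}$ span the homogeneous solutions, imposing $m(s_0)=0$ and the unit jump of $m'$ across $s_0$ gives
\[
m(s)=\frac{\sqrt{ss_0}}{2\nu}\Bigl[(s/s_0)^{\nu}-(s/s_0)^{-\nu}\Bigr].
\]
Then \eqref{k bound 0} amounts to the elementary inequality $m(s)\le(s-s_0)(s/s_0)^{\nu-\frac12}$, equivalently $\tfrac{1-t^{2\nu}}{1-t}\le2\nu$ for $t=s_0/s\in(0,1]$, which holds by the mean value theorem because $2\nu\ge1$.

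\emph{Step 3: the $M_y$ bound on $\partial_s k$, and its refinement.} Differentiating the representation gives $\partial_s k=\partial_s k_0+(\nu^2-\tfrac14)\,\partial_s\bigl(E*((s')^{-2}k)\bigr)$. Here $\partial_s k_0=\tfrac12\bigl[\delta_{\{y=s-s_0\}}+\delta_{\{y=-(s-s_0)\}}\bigr]$ has total variation $1$, whereas the identity $\partial_s(E*g)(s,y)=\tfrac12\int_{s_0}^s\bigl[g(s',y+(s-s'))+g(s',y-(s-s'))\bigr]\,ds'$ shows that the second summand is an honest function obeying $\|\partial_s(E*g)(s,\cdot)\|_{L^1_y}\le\int_{s_0}^s\|g(s',\cdot)\|_{L^1_y}\,ds'$. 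Taking $g=(s')^{-2}k$, inserting \eqref{k bound 0}, estimating $(s'/s_0)^{\nu-\frac12}\le(s/s_0)^{\nu-\frac12}$, and computing $\int_{s_0}^s(s')^{-2}(s'-s_0)\,ds'=\ln\tfrac{s}{s_0}-1+\tfrac{s_0}{s}\le\tfrac{s-s_0}{s_0}$ gives the $\partial_s k$ part of \eqref{k bound 1}. When $\nu\ge\tfrac32$, one instead evaluates $\int_{s_0}^s(s')^{\nu-5/2}(s'-s_0)\,ds'$ exactly; both this quantity and $\tfrac{s-s_0}{s}(s/s_0)^{\nu-\frac12}$ grow like a constant times $(s/s_0)^{\nu-\frac12}$ as $s/s_0\to\infty$ (with the former constant the smaller, using $\nu\ge\tfrac32$) and both vanish to order $(s-s_0)^2$ as $s\to s_0$, so a routine comparison yields \eqref{k bound 2}.

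\emph{Step 4: the $M_y$ bound on $\partial_{s_0}k$; the main obstacle.} Differentiating the integral equation in $s_0$, the contribution of the moving lower limit of integration is $(\nu^2-\tfrac14)\int_\R E(s-s_0,y-y')\,s_0^{-2}k(s_0,y';s_0)\,dy'$, which vanishes because $k(s_0,\cdot;s_0)=0$; hence $w:=\partial_{s_0}k$ satisfies the same-shaped equation $w=-\partial_s k_0+(\nu^2-\tfrac14)\,E*((s')^{-2}w)$. Since $E*\mu$ is a function whenever $\mu$ is a measure in $y$, with $\|E*\mu(s,\cdot)\|_{L^1_y}\le\int_{s_0}^s(s-s')\|\mu(s',\cdot)\|_{M_y}\,ds'$, iterating yields $\|w_n(s,\cdot)\|_{M_y}\le a_n(s)$, where $a_0\equiv1$ and $a_{n+1}(s)=(\nu^2-\tfrac14)\int_{s_0}^s(s-s')(s')^{-2}a_n(s')\,ds'$. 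Summing the recursion, $A:=\sum_n a_n$ obeys $A(s)=1+(\nu^2-\tfrac14)\int_{s_0}^s(s-s')(s')^{-2}A(s')\,ds'$, hence $A''=(\nu^2-\tfrac14)s^{-2}A$ with $A(s_0)=1$ and $A'(s_0)=0$, so
\[
A(s)=\frac{\nu-\tfrac12}{2\nu}\,(s/s_0)^{\nu+\frac12}+\frac{\nu+\tfrac12}{2\nu}\,(s/s_0)^{\frac12-\nu}.
\]
As $\tfrac{\nu-1/2}{2\nu}<1$ and the second term is bounded for $s\ge s_0$, one checks elementarily that $A(s)\lesssim1+\tfrac{s-s_0}{s_0}(s/s_0)^{\nu-\frac12}$; combined with Step 3, this gives \eqref{k bound 1}. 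The delicate point is sharpness: applying a crude Grönwall inequality directly to the self-referential bound for $\|w(s,\cdot)\|_{M_y}$ loses a power and produces growth of order $(s/s_0)^{1+\nu^2-1/4}$ rather than $(s/s_0)^{\nu+1/2}$. The cure, used above, is to pass to the second-order comparison problem and exploit that the retarded Green's function of $\tfrac{d^2}{ds^2}-(\nu^2-\tfrac14)s^{-2}$, built from $s^{1/2\pm\nu}$, is sign-definite, so the iteration never overshoots the solution $A$ of the companion linear ODE.
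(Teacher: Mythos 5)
Your proof is correct, and it takes a genuinely different route from the paper. The paper computes the spatial Fourier transform of $k$ explicitly in terms of Bessel functions $J_\nu,Y_\nu$ (and Hankel functions), deduces \eqref{fsp} by a Paley--Wiener contour deformation, and then proves \eqref{k bound 0}--\eqref{k bound 2} by bounding $\|\hat k-\hat k_0\|_{L^1_\xi}$ (and, for the derivative estimates, a suitably corrected difference to capture a cancellation in the $O(|\xi|^{-1})$ terms) on three frequency regions $|\xi|\le s^{-1}$, $s^{-1}\le|\xi|\le s_0^{-1}$, $|\xi|\ge s_0^{-1}$ using the small- and large-argument asymptotics of the Bessel functions. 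Your argument stays entirely in physical space: you build $k$ as a Neumann series off the free propagator $E$, observe that $\nu\ge\tfrac12$ makes every iterate nonnegative (this is the key new input absent from the paper), and then convert $\|k(s,\cdot;s_0)\|_{L^1_y}=\int k\,dy$ into the solution of the companion ODE $m''-(\nu^2-\tfrac14)s^{-2}m=\delta(s-s_0)$, which yields \eqref{k bound 0} with an explicit closed form and a clean elementary inequality. The derivative bounds then follow from differentiating the Volterra equation and iterating, with the crucial refinement of comparing the total-variation iterates against the solution $A$ of the second-order comparison ODE rather than applying a naive Gr\"onwall estimate, which (as you correctly note) would lose and give growth $(s/s_0)^{1+\nu^2-1/4}$ instead of $(s/s_0)^{\nu+1/2}$. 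What each approach buys: the paper's Fourier/Bessel method generalizes more readily to situations where positivity of the propagator is unavailable (e.g.\ higher dimensions, where the wave propagator changes sign), whereas your approach is more elementary, avoids special functions and complex analysis entirely, and gives sharper explicit constants through the ODE reduction. One cosmetic remark: in Step~3 it is worth stating explicitly that for the $\nu\ge\tfrac32$ refinement the inequality $\int_1^R(r-1)r^{\nu-5/2}\,dr\lesssim 1+(1-R^{-1})R^{\nu-1/2}$ uses $g(R)=1+R^{\nu-3/2}(R-1)\ge1$ together with the asymptotic limit $1/(\nu-\tfrac12)\le1$; the comparison is ``routine'' as you say, but the reader benefits from seeing where $\nu\ge\tfrac32$ enters.
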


When $\nu=1/2$, $k$ is the fundamental solution for the regular wave equation, namely,
$$
k_0(s,y;s_0) = \frac12 \chi_{\{|y|\leq s-s_0\}} \quad\text{or}\quad \hat k_0(s,\xi;s_0) = \frac{\sin(2\pi\xi[s-s_0])}{2\pi\xi}.
$$
The properties \eqref{fsp}, \eqref{k bound 0}, and \eqref{k bound 1} are obvious in this case.

Just as for the usual wave equation, the fundamental solution $k$ carries all the information necessary for the Duhamel formula:
\begin{equation}\label{duhamel}
v(s) = k(s;s_0) * v_s(s_0) - \tfrac{\partial k}{\partial s_0}(s;s_0) * v(s_0) + \int_{s_0}^s k(s;s_1) * F(s_1) \,ds_1,
\end{equation}
whenever $0<s_0<s$ and $v$ solves \eqref{singular eq}. Here $*$ represents convolution in the $y$ variable.  The estimates appearing in Proposition~\ref{P:dispersive}
allow us to address the possibility of sending $s_0 \downarrow 0$ in this formula, which we will do in the proof of Corollary~\ref{C:singular wave}.

\begin{proof}
Fix $\nu\geq \frac12$.  Claim \eqref{fsp} is inherited from the usual wave equation, as can be seen by iterated Duhamel expansion.
We will also give an analytical proof of this in what follows.

As equation \eqref{singular eq} is invariant under spatial translation, one can compute $k$ via its spatial Fourier transform
$$
\hat k (s,\xi;s_0):= \int_{\R} e^{-2\pi i y\xi} k(s,y;s_0)\, dy.
$$
Taking the spatial Fourier transform in \eqref{k eq} we obtain
\begin{align*}
\hat k_{ss} + 4\pi^2 \xi^2 \hat k - (\nu^2-\tfrac 14) s^{-2} \hat k = \delta (s-s_0).
\end{align*}
This is a version of the Bessel equation and has solution (cf. \cite[\S9.1.49]{AS})
\begin{equation*}
\hat k(s,\xi;s_0)= \begin{cases}
\tfrac{\pi}2 \sqrt{ss_0\rstrut}\bigl[ J_\nu(2\pi |\xi| s_0) Y_\nu(2\pi |\xi| s) - Y_\nu(2\pi |\xi| s_0) J_\nu(2\pi |\xi| s) \bigr], &\quad s>s_0\\
0, &\quad s\leq s_0,
\end{cases}
\end{equation*}
with $J_\nu$ and $Y_\nu$ denoting the Bessel functions of the first and second kind, respectively.  Notice that $\hat k$ is a continuous function in $s$, while $\partial_s \hat k$
has a jump discontinuity at $s=s_0$; indeed, $\partial_s \hat k \to 1$ as $s\downarrow s_0$, as follows from the Wronskian relation for Bessel functions (cf. \cite[\S9.1.16]{AS}).

For fixed $s,s_0\in (0,\infty)$, $\hat k$ extends from $\xi>0$ to an entire function in $\C$ which is even under the transformation $\xi\mapsto -\xi$.
Indeed, while both $J_\nu$ and $Y_\nu$ typically have branch singularities at the origin, these cancel out in the combination that forms $\hat k$; see \cite[\S9.1.2, 9.1.10, 9.1.11]{AS}.

One can also expand $\hat k$ in terms of the Hankel functions $H^{(1)}_\nu = J_\nu + iY_\nu$ and $H^{(2)}_\nu = J_\nu - iY_\nu$:
\begin{equation*}
\hat k(s,\xi;s_0)= \begin{cases}
\tfrac{\pi}{4i} \sqrt{ss_0\rstrut}\bigl[ H_\nu^{(2)}(2\pi \xi s_0) H_\nu^{(1)}(2\pi \xi s) - H_\nu^{(1)}(2\pi \xi s_0) H_\nu^{(2)}(2\pi \xi s)  \bigr] ,\!\!&  s>s_0\\
0, \!\!& s\leq s_0.
\end{cases}
\end{equation*}
Using the behavior of the Hankel functions in the whole complex plane (cf. \cite[\S9.2.3, 9.2.4]{AS}), we find that
$$
\bigl| \hat k(s,\xi;s_0) \bigr| = O_{s,s_0}\bigl (e^{2\pi\lvert\Im \xi| |s-s_0|} \bigr) \quad \text{as}\quad |\xi|\to \infty.
$$
Thus, by deforming the contour in the imaginary direction (up if $y>0$, down if $y<0$), one can deduce
$$
k(s,y;s_0)=\int_{\R} e^{2\pi i y\xi} \hat k(s,\xi;s_0)\, d\xi = 0 \quad \text{for} \quad |y|>s-s_0>0.
$$
This completes the proof of \eqref{fsp}.

It remains to prove \eqref{k bound 0}, \eqref{k bound 1}, and \eqref{k bound 2}.  As the kernel $k_0$ associated with the usual wave equation obeys the bound \eqref{k bound 0}, we need only control on the difference $k-k_0$ in $L^1_y$.  Both kernels also obey \eqref{fsp} and so
\begin{equation*}\label{E:un diff step}
\bigl\|k-k_0\bigr\|_{L^1_y (\R)} \leq 2|s-s_0| \bigl\|k-k_0\bigr\|_{L^\infty_y (\R)} \leq 2|s-s_0| \bigl\|\hat k-\hat k_0\bigr\|_{L^1_\xi (\R)}.
\end{equation*}
This is the strategy we use to prove \eqref{k bound 0}; compare \eqref{E:un diff goal} below.  We will prove \eqref{k bound 1} in an analogous manner; however, a more careful subtraction needs to be made.  Specifically, we will show that
\begin{equation}\label{E:diff goal}
\bigl\|\partial_{s_0} \hat k(s,\xi;s_0) - \partial_{s_0} \hat k_0(s,\xi;s_0) + \tfrac{4\nu^2-1}{8} \tfrac{s-s_0}{ss_0}\hat k_0(s,\xi;s_0) \bigr\|_{L^1_\xi (\R)}
    \lesssim s_0^{-1} \Bigl(\frac{s}{s_0}\Bigr)^{\nu-\frac12}.
\end{equation}
The analogue of \eqref{E:diff goal} for $\partial_s k$ is
\begin{equation}\label{Dsk}
\bigl\|\partial_{s} \hat k(s,\xi;s_0) - \partial_{s} \hat k_0(s,\xi;s_0) - \tfrac{4\nu^2-1}{8} \tfrac{s-s_0}{ss_0}\hat k_0(s,\xi;s_0) \bigr\|_{L^1_\xi (\R)}
    \lesssim s^{-1} \Bigl(\frac{s}{s_0}\Bigr)^{\nu-\frac12}.
\end{equation}
We will not give details of the proof of \eqref{Dsk}, because it differs very little from that of \eqref{E:diff goal}.  Note that \eqref{k bound 2} follows from \eqref{Dsk}; the necessity of taking
$\nu\geq \frac32$ comes from estimating the contribution of $\hat k_0$.

To complete the proof of \eqref{k bound 0} and \eqref{k bound 1}, we need only crude bounds on the Bessel functions for small values of $\xi$, namely,
\begin{equation}\label{E:JY small}
|J_\nu(z)| \lesssim \min\bigl(z^{\nu},z^{-1/2} \bigr) \quad\text{and}\quad |Y_\nu(z)| \lesssim \max\bigl(z^{-\nu},z^{-1/2} \bigr)
\end{equation}
for each fixed $\nu\geq\frac12$ and uniformly in $z\in(0,\infty)$.  For larger $\xi$ we use
\begin{equation} \label{J+Y}
 J_\nu(z) + i Y_\nu(z) = H^{(1)}_\nu(z) = \bigl( \tfrac2{\pi z} \bigr)^{1/2} a_\nu(z) e^{iz - i\pi(2\nu+1)/4},
\end{equation}
where $z\in[1,\infty)$ and $a_\nu$ is complex-valued and obeys (cf. \cite[\S9.2.5--10]{AS})
$$
a_\nu(z) = 1 + i\tfrac{4\nu^2-1}{8z} + O(z^{-2}) = 1 + O(z^{-1}) \quad \text{uniformly for $z\in[1,\infty)$.}
$$
Remember that $J_\nu$ and $Y_\nu$ are real-valued for such $z$.

To control $\partial_{s_0} k$, we combine the above with the relations
$$
\partial_z J_\nu(z) =\tfrac{\nu}z J_\nu(z) -  J_{\nu+1} (z) \quad \text{and} \quad \partial_z Y_\nu(z) =\tfrac{\nu}z Y_\nu(z) -  Y_{\nu+1} (z);
$$
see \cite[\S9.1.27]{AS}.  This yields
\begin{align*}
\partial_{s_0} \hat k(s,\xi; s_0)
&= \tfrac{\pi}2 \sqrt{ss_0\vphantom{i}}\Bigl\{\tfrac{2\nu+1}{2s_0}\bigl[J_\nu(2\pi |\xi| s_0) Y_\nu(2\pi |\xi| s) -  Y_\nu(2\pi |\xi| s_0) J_\nu(2\pi |\xi| s)\bigr] \\
&\qquad\quad -2\pi|\xi|\bigl[ J_{\nu+1}(2\pi |\xi| s_0) Y_\nu(2\pi |\xi| s) - Y_{\nu+1}(2\pi |\xi| s_0) J_\nu(2\pi |\xi| s)  \bigr]\Bigr\},
\end{align*}
whenever $s>s_0$.

We now turn to the details of showing \eqref{E:diff goal} and
\begin{equation}\label{E:un diff goal}
\bigl\| \hat k(s,\xi;s_0) - \hat k_0(s,\xi;s_0) \bigr\|_{L^1_\xi(\R)} \lesssim \Bigl(\frac{s}{s_0}\Bigr)^{\nu-\frac12}.
\end{equation}
As some terms can be treated simultaneously, we use $\alpha\in\{ 0,1\}$ to indicate the number of $s_0$ derivatives.  We divide the integrals into three regions, as follows:

\textbf{Region I:} $|\xi|\leq s^{-1}$.  For $\xi$ in this region, $\alpha\in \{0,1\}$, and $s>s_0$, we have
\begin{align*}
\bigl|\partial_{s_0}^\alpha \hat k(s,\xi;s_0) \bigr| + \bigl|\partial_{s_0}^\alpha \hat k_0(s,\xi;s_0) \bigr| \lesssim  \sqrt{ss_0\rstrut} \Bigl(\frac{s}{s_0}\Bigr)^{\nu} s_0^{-\alpha} ,
\end{align*}
as follows easily from \eqref{E:JY small}.
Thus, we can estimate the contributions of this region by
\begin{align}\label{r1}
\int_{|\xi|\leq s^{-1}} \bigl|\partial_{s_0}^\alpha \hat k(s,\xi;s_0) \bigr| + \bigl|\partial_{s_0}^\alpha \hat k_0(s,\xi;s_0) \bigr| \, d\xi \lesssim s_0^{-\alpha}  \Bigl(\frac{s}{s_0}\Bigr)^{\nu-\frac12}.
\end{align}

\textbf{Region II:} $s^{-1} \leq |\xi| \leq s_0^{-1}$. Using \eqref{E:JY small} again, we find
\begin{align*}
\bigl|\partial_{s_0}^\alpha \hat k(s,\xi;s_0) \bigr| + \bigl|\partial_{s_0}^\alpha \hat k_0(s,\xi;s_0) \bigr| \lesssim  \sqrt{ss_0\rstrut} |\xi|^{-\nu-\frac12} s_0^{-\nu-\alpha} s^{-\frac12}.
\end{align*}
Therefore, we bound the contribution of this region by
\begin{align}\label{r2}
\int_{s^{-1} \leq |\xi| \leq s_0^{-1}} \bigl|\partial_{s_0}^\alpha \hat k(s,\xi;s_0) \bigr| + \bigl|\partial_{s_0}^\alpha \hat k_0(s,\xi;s_0) \bigr| \, d\xi \lesssim s_0^{-\alpha}  \Bigl(\frac{s}{s_0}\Bigr)^{\nu-\frac12}.
\end{align}
In the trivial case $\nu=\frac12$, integrating the bound given above actually produces a factor of $\log(s/s_0)$ rather than $(s/s_0)^0=1$.

\textbf{Region III:} $|\xi|\geq s_0^{-1}$.  Now we use formula \eqref{J+Y}.  This quickly yields
\begin{align*}
\bigl|\hat k(s,\xi;s_0) - \hat k_0(s,\xi;s_0) \bigr| \lesssim  \frac{1}{s_0|\xi|^2}
\end{align*}
and consequently,
\begin{align}\label{r3a}
\int_{|\xi|>s_0^{-1}} \bigl|\hat k(s,\xi;s_0) - \hat k_0(s,\xi;s_0) \bigr| \, d\xi  \lesssim 1 \lesssim \Bigl(\frac{s}{s_0}\Bigr)^{\nu-\frac12} .
\end{align}

For the derivative, the analogue of the first calculation is longer; it yields
\begin{align*}
\bigl|\partial_{s_0} \hat k(s,\xi;s_0) - \partial_{s_0} \hat k_0(s,\xi;s_0) + \tfrac{4\nu^2-1}{8} \tfrac{s-s_0}{ss_0}\hat k_0(s,\xi;s_0) \bigr| \lesssim  \frac{1}{s_0^2|\xi|^2}.
\end{align*}
(The $O(|\xi|^{-1})$ terms collapse so elegantly because $\tfrac{4(\nu+1)^2-1}{8s_0} - \tfrac{2\nu+1}{2s_0} = \tfrac{4\nu^2-1}{8s_0}$.)  This leads immediately to
\begin{align}\label{r3b}
\int_{|\xi|>s_0^{-1}} \bigl|\partial_{s_0} \hat k(s,\xi;s_0) - \partial_{s_0} \hat k_0(s,\xi;s_0) + \tfrac{4\nu^2-1}{8} \tfrac{s-s_0}{ss_0}\hat k_0(s,\xi;s_0) \bigr| \, d\xi  \lesssim s_0^{-1},
\end{align}
which is consistent with \eqref{E:diff goal}.

Combining \eqref{r1}, \eqref{r2}, \eqref{r3a}, and \eqref{r3b} proves \eqref{E:diff goal} and \eqref{E:un diff goal}, which in turn complete the proof of the proposition.
\end{proof}

\begin{corollary}[Estimates on solutions to the singular wave equation]\label{C:singular wave}
Fix $\nu>\frac32$, $\delta>\nu-\tfrac32$, and a compact interval $0\in I\subseteq [0, \infty)$. Consider the initial-value problem
\begin{equation*}
\begin{cases}
v_{ss} - v_{yy} - (\nu^2-\tfrac14) s^{-2} v = F\\
v(0)=0, \quad v_s(0)=0
\end{cases}
\end{equation*}
for $v:I\times\R\to \R$.  If $F\in C^0_{s,y}(I\times\R)$ with $s^{-\delta} F(s,y) \in L^\infty_{s,y}(I\times\R)$, then this admits a distributional solution with $v$ and $v_s$ continuous; moreover, for integers $\alpha, \beta\geq 0$ and $s\in I$,
\begin{align}\label{v diff}
 \bigl\| \partial_s^\alpha \partial_y^\beta v(s,y)\bigr\|_{L_{y}^\infty(\R)}
\lesssim s^{2+\delta} \sum_{\mu=0}^\alpha \bigl\|s^{\mu-\alpha-\delta} \partial_s^{\mu} \partial_y^\beta  F\bigr\|_{L_{s,y}^\infty(I\times\R)},
\end{align}
 whenever the right-hand side is finite.
\end{corollary}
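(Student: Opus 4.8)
The plan is to realize $v$ through the Duhamel formula~\eqref{duhamel} with vanishing Cauchy data, send the initial time to $0$, and then bound the $s$-derivatives by differentiating the equation. The hypothesis $\delta>\nu-\tfrac32$ is precisely what makes the limiting procedure converge.

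I begin with $\alpha=\beta=0$. For $0<s_0<s$, the function $v^{(s_0)}(s):=\int_{s_0}^s k(s;s_1)*F(s_1)\,ds_1$ solves~\eqref{singular eq} on $\{s>s_0\}$ with zero data at $s=s_0$ (no diagonal term appears on differentiating in $s$, since $\hat k(s_0,\cdot\,;s_0)\equiv 0$). Combining $\|k(s;s_1)\|_{L^1_y}\lesssim (s-s_1)(s/s_1)^{\nu-\frac12}$ from~\eqref{k bound 0} with $|F(s_1,y)|\le s_1^{\delta}\,\|s^{-\delta}F\|_{L^\infty}$, the rescaling $s_1=s\tau$ turns $\int_0^s(s-s_1)(s/s_1)^{\nu-\frac12}s_1^{\delta}\,ds_1$ into $s^{2+\delta}$ times a Beta integral, which converges precisely when $\delta-\nu+\tfrac12>-1$. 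This yields absolute convergence of $\int_0^s k(s;s_1)*F(s_1)\,ds_1$ in $L^\infty_y$ and the bound $\|v(s)\|_{L^\infty_y}\lesssim s^{2+\delta}\|s^{-\delta}F\|_{L^\infty}$; a second use of~\eqref{k bound 0} shows $v^{(s_0)}\to v:=\int_0^s k(s;s_1)*F(s_1)\,ds_1$ uniformly on $I\times\R$ as $s_0\downarrow 0$, so $v$ is a distributional solution with $v(0,\cdot)=0$. Differentiating the formula in $s$ and using this time~\eqref{k bound 2}---here is where $\nu\ge\tfrac32$ is needed, the weaker bound~\eqref{k bound 1} only giving convergence for $\delta>\nu-\tfrac12$---shows $v_s=\int_0^s \partial_s k(s;s_1)*F(s_1)\,ds_1$ is continuous with $\|v_s(s)\|_{L^\infty_y}\lesssim s^{1+\delta}\|s^{-\delta}F\|_{L^\infty}$, so $v_s(0,\cdot)=0$. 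Since~\eqref{singular eq} is translation-invariant in $y$, replacing $F$ by $\partial_y^\beta F$ gives the $\alpha=0$ case of~\eqref{v diff} for all $\beta$.

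For the $s$-derivatives I induct on $\alpha$, the case $\alpha=0$ being the above. Suppose~\eqref{v diff} holds at all orders $<\alpha$, for every $\beta$, every admissible $F$, and every exponent $>\nu-\tfrac32$. Differentiating~\eqref{singular eq} once in $s$, the function $w:=v_s$ solves
\begin{equation*}
w_{ss}-w_{yy}-(\nu^2-\tfrac14)s^{-2}w=G,\qquad G:=F_s-2(\nu^2-\tfrac14)s^{-3}v,
\end{equation*}
with $w(0,\cdot)=0$ and, from $v_{ss}=v_{yy}+(\nu^2-\tfrac14)s^{-2}v+F$ together with the $\alpha=0$ bounds (using $\delta>0$), $w_s(0,\cdot)=v_{ss}(0,\cdot)=0$. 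The heart of the matter is that $G$ satisfies the hypotheses of the Corollary at order $\alpha-1$ \emph{with the same $\delta$}: for $F_s$ this is immediate, as $\|s^{\mu-(\alpha-1)-\delta}\partial_s^\mu\partial_y^\beta F_s\|_{L^\infty}=\|s^{(\mu+1)-\alpha-\delta}\partial_s^{\mu+1}\partial_y^\beta F\|_{L^\infty}$ is one of the terms on the right of~\eqref{v diff}; for $s^{-3}v$, the Leibniz rule produces terms $s^{-3-i}\partial_s^{\mu-i}\partial_y^\beta v$ with $\mu-i\le\alpha-1$, and applying~\eqref{v diff} at order $\mu-i$ to $v$---legitimate since $F$ is then admissible at that order with exponent $\alpha-(\mu-i)+\delta>\nu-\tfrac32$---gives $\|\partial_s^{\mu-i}\partial_y^\beta v(s)\|_{L^\infty}\lesssim s^{2+\alpha-\mu+i+\delta}$, so that the weight $s^{\mu-(\alpha-1)-\delta}s^{-3-i}$ exactly cancels the gain and the contribution is $O(1)$. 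One also checks that $w=v_s$ coincides with the Duhamel solution of its own equation---so that the induction hypothesis applies to it---either by integrating $\int_0^s\partial_s k(s;s_1)*F(s_1)\,ds_1$ by parts in $s_1$ and using the off-diagonal identity $\partial_s^2 k=\partial_y^2 k+(\nu^2-\tfrac14)s^{-2}k$, or by observing that any homogeneous solution with zero Cauchy data in the relevant class has spatial Fourier transform a multiple of $\sqrt{s}\,J_\nu(2\pi|\xi|s)$ and hence decays like $s^{\nu+\frac12}$, slower than the common decay $s^{\alpha+1+\delta}$ of the two candidates. The induction hypothesis applied to $w$ then gives
\[
\|\partial_s^\alpha\partial_y^\beta v(s)\|_{L^\infty_y}=\|\partial_s^{\alpha-1}\partial_y^\beta w(s)\|_{L^\infty_y}\lesssim s^{2+\delta}\sum_{\mu=0}^{\alpha-1}\bigl\|s^{\mu-(\alpha-1)-\delta}\partial_s^\mu\partial_y^\beta G\bigr\|_{L^\infty},
\]
and the estimates just recorded bound the right-hand side by $s^{2+\delta}\sum_{\mu=0}^{\alpha}\|s^{\mu-\alpha-\delta}\partial_s^\mu\partial_y^\beta F\|_{L^\infty}$, closing the induction.

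The step I expect to be most delicate is precisely this exponent bookkeeping for the $s$-derivatives: one must check that every power of $s$ matches so that no spurious positive power of $\diam I$ survives, and that the singular term $s^{-3}v$ in $G$---which a priori only decays like $s^{\delta-1}$---is nonetheless as flat as the lower-order problem demands, which is exactly the information supplied by the inductive decay rate for $v$. A secondary delicate point is that $\partial_s k$ is only a finite measure, so its integration against the continuous forcing, and the continuity of the resulting $v_s$, rest on the sharp total-variation bound~\eqref{k bound 2}, available for $\nu\ge\tfrac32$.
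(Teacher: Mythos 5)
Your proof shares the basic Duhamel\,$+$\,commutator strategy with the paper, but the two arguments are organized quite differently and yours has a real soft spot. The paper regularizes first: it approximates $F$ by smooth $F^{(\eps)}$ supported in $\{s\geq\eps\}$, invokes standard strictly hyperbolic theory to obtain smooth $v^{(\eps)}$ vanishing for $s<\eps$, and then applies $\partial_s^\alpha\partial_y^\beta$ to the equation \emph{all at once}, so that $\partial_s^\alpha\partial_y^\beta v^{(\eps)}$ solves the \emph{same} singular equation with forcing $\partial_y^\beta F_\alpha^{(\eps)}$ given by the Leibniz commutator. The Duhamel representation of $\partial_s^\alpha\partial_y^\beta v^{(\eps)}$ is then automatic (standard theory on $\{s>\eps/2\}$), and the whole game reduces to the exponent bookkeeping plus a careful $\eps\downarrow 0$ limit (using an auxiliary $\tilde\delta<\delta$ at the end). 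Your approach instead defines $v$ directly as the improper Duhamel integral, and commutes one $\partial_s$ at a time by passing to $w=v_s$ with its \emph{new} forcing $G=F_s-2(\nu^2-\tfrac14)s^{-3}v$; the exponent arithmetic you do for $G$ (using the sharper rate $\lvert v\rvert\lesssim s^{2+\alpha+\delta}$ obtained by taking exponent $\alpha+\delta$ in the order-zero bound) is correct, and correctly exploits the freedom to vary $\delta$ in the induction hypothesis.

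The gap is the step you yourself flag as delicate: you must know that $w=v_s$ coincides with $\int_0^s k(s;s_1)*G(s_1)\,ds_1$ before the induction hypothesis can be applied to it, and neither of your two sketches is complete. The decay-based uniqueness argument is stated at the level of the spatial Fourier transform ($\hat d(s,\xi)=c(\xi)\sqrt{s}\,J_\nu(2\pi\lvert\xi\rvert s)$ decays exactly like $s^{\nu+1/2}$), but your a priori control on the difference $d$ is an $L^\infty_y$ bound; since $F$ need only be bounded in $y$, neither $d$ nor $c$ has a Fourier transform as a function, and the implication ``$\lVert d(s)\rVert_{L^\infty_y}=O(s^{\alpha+1+\delta})$ with $\alpha+1+\delta>\nu+\tfrac12$ forces $d\equiv0$'' is not established by the sketch (it requires localization by finite speed of propagation or an energy argument, neither of which you supply). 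The integration-by-parts route is sound in principle---what one actually needs is the resolvent-type identity $\partial_s\hat k(s;s_0)+\partial_{s_0}\hat k(s;s_0)=-2(\nu^2-\tfrac14)\int_{s_0}^s\hat k(s;s_1)s_1^{-3}\hat k(s_1;s_0)\,ds_1$, which does hold (both sides solve the same inhomogeneous ODE in $s$ with the same data at $s=s_0$), but this is a substantive lemma, not a one-line integration by parts, and you neither state nor prove it. You also assert rather than prove that the limiting $v$ is a distributional solution with $v,v_s$ continuous up to $s=0$; the paper's $\eps\downarrow0$ convergence argument (comparing $v^{(\eps)}$ and $v^{(\eps')}$ on $\{\eps<s\leq\eps'\}$ and on $\{\eps'<s\}$ with the help of \eqref{k bound 1}, \eqref{k bound 2} and the auxiliary $\tilde\delta$) is exactly what settles this, and nothing analogous appears in your writeup. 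So: same estimates, genuinely different organization, and the uniqueness/representation step that your organization creates is the one place where your proof is currently a sketch rather than a proof.
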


\begin{proof}
If $F^{(\eps)}\in C^\infty_{s,y}$ and supported on $[\eps,\infty)\times\R$, then the standard theory of strictly hyperbolic equations applies (cf. \cite[Ch. XXIII]{Hormander3}) and
yields a solution $v^{(\eps)} \in C^\infty_{s,y}$ that vanishes when $s<\eps$ and also obeys
\begin{gather*}
\bigl[\partial_{s}^2 - \partial_{y}^2 - (\nu^2-\tfrac14) s^{-2} \bigr] \partial_s^\alpha\partial_y^\beta v^{(\eps)} = \partial_y^\beta F_{\alpha}^{(\eps)} \\
\text{where} \qquad    F_{\alpha}^{(\eps)} : = \partial_s^\alpha F^{(\eps)} +
    (\nu^2-\tfrac14) \sum_{\mu=0}^{\alpha-1} \tbinom\alpha\mu \bigl[\partial_s^{\alpha-\mu} s^{-2}\bigr]\bigl[\partial_s^{\mu}v^{(\eps)} \bigr].
\end{gather*}
The key issue is to obtain bounds independent of $\eps$ and convergence as $\eps\downarrow0$ without relying on the additional smoothness of $F^{(\eps)}$.

For $0<\eps<s\leq \eps'<\infty$,
\begin{align*}
\bigl| \partial_s^\alpha\partial_y^\beta \bigl[v^{(\eps)} - v^{(\eps')}  \bigr] (s,y)\bigr| &= \bigl| \partial_s^\alpha\partial_y^\beta v^{(\eps)} (s,y)\bigr|\\
&\leq \biggl| \int_0^s  \int_\R k(s,z;s_0) \partial_y^\beta F^{(\eps)}_\alpha(s_0,y-z)\,dz\,ds_0 \biggr|
\end{align*}
and so by \eqref{k bound 0} we see that for $\delta>\nu-\frac32$,
\begin{align*}
\bigl| \partial_s^\alpha\partial_y^\beta v^{(\eps)} (s)\bigr|
    &\lesssim_{\alpha,\delta}  (s-\eps)^2 s^{\delta} \Bigl\{ \bigl\| s^{-\delta} \partial_s^\alpha \partial_y^\beta  F^{(\eps)} \bigr\|_{L_{s,y}^\infty}  +
        \sum_{\mu=0}^{\alpha-1} \bigl\| s^{\mu-\alpha-\delta-2} \partial_s^{\mu} \partial_y^\beta v^{(\eps)} \bigr\|_{L_{s,y}^\infty} \Bigr\}.
\end{align*}
Combining this with induction in $\alpha$ yields
\begin{align*}
\bigl| \partial_s^\alpha\partial_y^\beta \bigl[v^{(\eps)} - v^{(\eps')}  \bigr] (s,y)\bigr| &= \bigl| \partial_s^\alpha\partial_y^\beta v^{(\eps)} (s,y)\bigr|\\
    &\lesssim_{\alpha,\delta}  (s-\eps)^2 s^{\delta} \sum_{\mu=0}^{\alpha}  \bigl\| s^{\mu-\alpha-\delta} \partial_s^\mu \partial_y^\beta  F^{(\eps)} \bigr\|_{L_{s,y}^\infty}
    \end{align*}
uniformly for $0<\eps<s\leq \eps'<\infty$ and $y\in\R$.  In addition to showing convergence, these bounds also provide uniform control by sending $\eps\downarrow0$ and
$\eps'\to\infty$.

By using \eqref{k bound 2} in addition to the above, it is possible to control one more derivative of $v^{(\eps)}$:
\begin{align*}
\bigl| \partial_s^{\alpha+1} \partial_y^\beta v^{(\eps)}(s,y) \bigr|  &\leq \biggl| \int_0^s \int_\R k_s(s,z;s_0) \partial_y^\beta F^{(\eps)}_\alpha(s_0,y-z)\,dz\,ds_0 \biggr| \\
&\lesssim (s-\eps) s^{\delta} \sum_{\mu=0}^{\alpha}  \bigl\| s^{\mu-\alpha-\delta} \partial_s^\mu \partial_y^\beta  F^{(\eps)} \bigr\|_{L_{s,y}^\infty}.
\end{align*}
Notice that the additional derivative costs one power of $s$ on the right-hand side.

It remains to estimate $v^{(\eps)} - v^{(\eps')}$ in the region $0<\eps<\eps'<s<\infty$.  By the Duhamel formula \eqref{duhamel},
\begin{align*}
\bigl| \partial_s^\alpha\partial_y^\beta \bigl[v^{(\eps)} - v^{(\eps')}  \bigr](s) \bigr| &\leq  \bigl| k(s;\eps') * \partial_s^{\alpha+1}\partial_y^\beta  v^{(\eps)}(\eps') \bigr|
    + \bigl| \tfrac{\partial k}{\partial s_0}(s;\eps') * \partial_s^\alpha\partial_y^\beta  v^{(\eps)}(\eps') \bigr| \\
& \qquad + \int_{\eps'}^s \bigl| k(s;s_0) * \partial_y^\beta [ F_{\alpha}^{(\eps)}(s_0) - F_{\alpha}^{(\eps')}(s_0)]\bigr| \,ds_0.
\end{align*}
Combining \eqref{k bound 0} and \eqref{k bound 1}, with
$$
\bigl| \partial_s^\alpha\partial_y^\beta  v^{(\eps)}(\eps') \bigr| \lesssim_F (\eps'-\eps)^2 (\eps')^{\delta} \quad\text{and}\quad
    \bigl| \partial_s^{\alpha+1}\partial_y^\beta  v^{(\eps)}(\eps') \bigr| \lesssim_F (\eps'-\eps) (\eps')^{\delta},
$$
which where proved previously, we see that the first two terms obey
\begin{align*}
\bigl| k(s;\eps') * \partial_s^{\alpha+1}\partial_y^\beta  v^{(\eps)}(\eps') \bigr|
    + \bigl| \tfrac{\partial k}{\partial s_0}(s;\eps') * \partial_s^\alpha\partial_y^\beta  v^{(\eps)}(\eps') \bigr|
    \lesssim_F s^{\nu+\frac12} (\eps')^{\delta-\nu+\frac12} (\eps'-\eps),
\end{align*}
and hence converge to zero as $\eps'\to0$, since $\delta>\nu-\frac32$.

To estimate the contribution of the inhomogeneity, we argue exactly as in the case $\eps<s\leq \eps'$ to obtain
\begin{align*}
\int_{\eps'}^s \!\bigl| k(s;s_0) * \partial_y^\beta [ F_{\alpha}^{(\eps)}(s_0) \!- \! F_{\alpha}^{(\eps')}(s_0)]\bigr| \,ds_0
    \lesssim s^{2+\tilde\delta}\! \sum_{\mu=0}^{\alpha}  \bigl\| s^{\mu-\alpha-\tilde\delta} \partial_s^\mu \partial_y^\beta  [ F^{(\eps)} \!-\! F^{(\eps')}] \bigr\|_{L_{s,y}^\infty},
\end{align*}
where $\nu-\frac32<\tilde\delta<\delta$ and $L_{s,y}^\infty$ relates to the slab $[\eps',s]\times\R$.   Using the fact that $\tilde\delta<\delta$ when $s$ is small
and that $F^{(\eps)}\to F$ to cover larger values of $s$, we see that the $L^\infty$ norm above converges to zero as $\eps'\to 0$.

This completes the proof of the convergence and thus settles the lemma.
\end{proof}

%%%%%%%%%%%%%%%%%%%%%%%%%%%%%%%%%%%%%%%%%%%%%%%%%%%%%%%%%%%%%%%%%%%%%%%%%%%%%%%%%%%%%%%%%%%
%
%
%                                   Section
%
%
%%%%%%%%%%%%%%%%%%%%%%%%%%%%%%%%%%%%%%%%%%%%%%%%%%%%%%%%%%%%%%%%%%%%%%%%%%%%%%%%%%%%%%%%%%%

\section{Construction of the blowup solution} \label{S:construction}

In this section we prove Theorem~\ref{T:main} and Corollary~\ref{C:Cantor}.  Theorem~\ref{T:main} will follow easily from Corollary~\ref{C:line reduction} and the following

\begin{theorem}[Regular solutions to \eqref{modified nlw} that blow up on a line]\label{T:main'}
Suppose either {\rm(i)} $p>0$ and $k=\infty$, or {\rm(ii)} $\frac4p\not\in \N$ and $3+\frac4p<k\in \N\cup\{\infty\}$.
Let $\lambda\in C^{k-1}_{s,y}([0,\infty)\times\R)$ such that $\lambda(0,y) =1$ for all $y\in \R$.
Then there exist a time $s_0(p,\lambda)>0$ and a solution $v\in C^{\lfloor k-3-4/p\rfloor}_{s,y}((0, s_0]\times\R)$ to \eqref{modified nlw} which blows up on the line
$\{s=0,\, y\in \R\}$ in the sense that
\begin{align}\label{blow}
\lim_{s\searrow 0}s^{\frac2p} v(s,y) =1 \quad\text{for all} \quad y\in \R.
\end{align}
Additionally, $s_0$ admits a positive lower bound that depends only on the $C^{\lceil 2+4/p \rceil}_{s,y}$ norm of $\lambda$.
\end{theorem}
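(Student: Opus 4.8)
The plan is to write the desired solution as $v = \tilde v + w$, where $\tilde v$ is the parametrix furnished by Proposition~\ref{P:parametrix} (or Proposition~\ref{P:parametrix'} when $4/p\in\N$), and then solve a fixed-point problem for the correction $w$. Plugging $v=\tilde v+w$ into \eqref{modified nlw} and using that $\tilde v$ satisfies the equation up to the error $\tE$, one finds that $w$ must solve an equation of the form
\begin{equation}\label{w eq}
w_{ss} - w_{yy} - (\nu^2-\tfrac14)s^{-2} w = -\tE + N(\tilde v, w), \qquad \nu := \tfrac32 + \tfrac2p,
\end{equation}
with zero data at $s=0$, where the singular linear term arises by linearizing the nonlinearity about $\tilde v = s^{-2/p}\rho$: the factor $\tfrac{2(p+2)}{p^2}\lambda |\tilde v|^p(p+1)$ contributes, to leading order at $s=0$, exactly the coefficient $\tfrac{2(p+2)(p+1)}{p^2}\rho_0^p = \tfrac{2(p+2)(p+1)}{p^2}$, and a brief computation identifies $(\nu^2-\tfrac14) = \tfrac4p + \tfrac4{p^2} + 2 = \tfrac{2(p+2)(p+1)}{p^2}$; the leftover from this linearization (the discrepancy between $\lambda|\tilde v|^p(p+1)$ and its boundary value, divided by $s^{-2}$) is lumped into the genuinely nonlinear remainder $N(\tilde v,w)$, which is at least quadratic in $w$ and also carries a piece linear in $w$ with a coefficient that is $o(1)$ as $s\to 0$.

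The core of the argument is then a contraction mapping scheme on a short time interval $(0,s_0]$, using the solution operator for \eqref{singular eq} built in Corollary~\ref{C:singular wave}. I would work in the Banach space $X$ of functions $w$ on $(0,s_0]\times\R$ with $\|s^{-2-\delta}\partial_s^\alpha\partial_y^\beta w\|_{L^\infty}$ finite for $\alpha+\beta \le m := \lfloor k-3-4/p\rfloor$, for a suitable $\delta$ slightly larger than $\nu-\tfrac32 = 2/p$; the estimate \eqref{v diff} says precisely that the linear solution operator gains $s^{2+\delta}$ from a forcing controlled in $s^\delta$-weighted norm with up to $\alpha$ extra $s$-derivatives, so $X$ is adapted to this gain. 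One checks: (a) the parametrix error $\tE$ obeys \eqref{accuracy}, i.e.\ $\partial_s^\alpha\partial_y^\beta\tE = o(s^{-2/p + k-3-\alpha})$, and since $-2/p + k - 3 > \delta$ for $\delta$ close to $2/p$ (this is where the hypothesis $k > 3 + 4/p$ is used — it guarantees $k-3-2/p > 2/p$, leaving room for $m\ge 0$ derivatives), feeding $\tE$ through Corollary~\ref{C:singular wave} produces a function in $X$ with small norm once $s_0$ is small; (b) the map $w\mapsto N(\tilde v,w)$ sends a ball in $X$ into a forcing class to which \eqref{v diff} applies, is Lipschitz with small constant on that ball for $s_0$ small, using that $N$ is quadratic-or-higher in $w$ together with the $s^{2+\delta}$ smallness of elements of $X$, and that the linear-in-$w$ part of $N$ has $o(1)$ coefficient; here one must track how the nonlinearity $|\tilde v + w|^p(\tilde v+w)$ and its derivatives are estimated — for $p$ not an even integer this requires a Moser-type / composition estimate and is the source of the regularity loss $3 + 4/p$ and of the messiness when $4/p\in\N$. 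Combining (a) and (b) gives a contraction on a small ball of $X$, hence a unique fixed point $w$, and therefore a solution $v=\tilde v+w$; the bound $\|s^{-2-\delta} w\|_{L^\infty}\lesssim 1$ together with \eqref{tv as s 0} yields the blowup rate \eqref{blow}, since $s^{2/p}v = \rho + s^{2/p}w = \rho + o(1) \to 1$.

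Finally, to get that $s_0$ has a lower bound depending only on $\|\lambda\|_{C^{\lceil 2+4/p\rceil}_{s,y}}$: I would observe that the smallness thresholds in (a) and (b) are governed by the size of $\tE$ and of the coefficients in $N$ on $(0,s_0]$, and the relevant quantities — the finitely many Taylor coefficients $\rho_j$ used in building $\tilde v$ near $s=0$ and the resulting constants in \eqref{accuracy} at the specific order $\alpha+\beta$ needed to close the contraction (which is $\alpha+\beta \le \lceil 2+4/p\rceil$, not the full $m$) — are polynomial expressions in $\partial_s^i\lambda(0,y)$ for $i\le \lceil 2+4/p\rceil$, hence controlled by $\|\lambda\|_{C^{\lceil 2+4/p\rceil}}$. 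One runs the contraction only at this reduced regularity level to produce a solution on a uniformly long interval, and then bootstraps the extra regularity up to $C^m$ on the (possibly shorter, but here one keeps the same) interval by differentiating the equation and reapplying Corollary~\ref{C:singular wave}. The main obstacle I anticipate is step (b): obtaining the Lipschitz estimate for the nonlinear map in the weighted $C^m$-type norm when $p$ is not an even integer, since $|v|^p v$ is then only finitely differentiable and one must carefully exploit the structure $\tilde v = s^{-2/p}\rho$ with $\rho$ bounded away from zero (so that $|\tilde v + w|^p = s^{-2}|\rho + s^{2/p}w|^p$ with the base of the power uniformly positive) to avoid spurious singularities and to account honestly for every derivative that lands on the non-smooth nonlinearity — this is exactly the bookkeeping that forces the threshold $k > 3 + 4/p$ and the exclusion of the integer case.
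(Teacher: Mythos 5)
Your proposal follows essentially the same route as the paper: decompose $v=\tilde v+w$ with $\tilde v$ the parametrix from Proposition~\ref{P:parametrix} (or \ref{P:parametrix'}), derive the Fuchsian equation~\eqref{w eq} for $w$ with $\nu=\tfrac32+\tfrac2p$, and close a contraction in a weighted $L^\infty$-type space using Corollary~\ref{C:singular wave}, choosing $\delta>\tfrac2p$ and $m+\delta\le k-3-\tfrac2p$, with the robust lower bound on $s_0$ obtained by running the contraction at the minimal regularity $\lceil 2+4/p\rceil$ and then invoking persistence of regularity. The only real differences are cosmetic (the paper's space $X^m_\delta$ carries an extra sum over $\mu\le\alpha$ with weight $s^{-(\alpha-\mu)-\delta-1}$, matching the right-hand side of~\eqref{v diff}, rather than a single uniform weight $s^{-2-\delta}$), plus a small arithmetic slip in your intermediate expression for $\nu^2-\tfrac14$ (it should be $2+\tfrac6p+\tfrac4{p^2}$, though your final value $\tfrac{2(p+1)(p+2)}{p^2}$ is right).
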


\begin{proof}
Given $s_0,\delta > 0$ and an integer $m\geq 0$, consider the Banach space $X^m_\delta([0,s_0])$ consisting of those $w\in C^m_{s,y}([0,s_0]\times \R)$ for which the norm
$$
\| w \|_{X^m_\delta([0,s_0])} := \sum_{\alpha+\beta\leq m} \sum_{\mu\leq\alpha} \  \| s^{-(\alpha-\mu)-\delta-1} \partial_s^\mu \partial_y^\beta w \|_{L^\infty_{s,y}([0,s_0]\times \R)}
$$
is finite.  This is the family of spaces in which we will run a Picard iteration (= contraction mapping) argument to improve the parametrices $\tilde v$ from Propositions~\ref{P:parametrix}~and~\ref{P:parametrix'} to an actual solution $v=\tilde v +w$.  In terms of the spaces just introduced, the error in the parametrices, namely,
$$
\tE:=\tilde v_{ss}   - \tilde v_{yy}  - \tfrac{2(p+2)}{p^2} \lambda(s,y) |\tilde v|^p \tilde v,
$$
satisfies
\begin{align}\label{Et in X}
\| \tE \|_{X^m_{\delta-1}([0,s_0])} \lesssim 1 \quad\text{whenever}\quad m+\delta \leq k-3-\tfrac2p.
\end{align}
In fact, this quantity is $o(1)$ as $s_0\to 0$; however, we will not make use of this.

In order for $v=\tilde v + w$ to be a solution to \eqref{modified nlw}, we must have
\begin{equation*}
w_{ss}   - w_{yy} - \tfrac{2(p+2)}{p^2} \lambda(s,y) \bigl(|v|^pv- |\tilde v|^p \tilde v\bigr) + \tE = 0.
\end{equation*}
By construction, $\tilde v(s,y) = s^{-2/p}\rho(s,y)$ with $\rho\in C^{k-1}_{s,y}$ and $\rho(0,y)\equiv 1$.  Using this and Taylor's formula, the equation for $w$ reduces to
\begin{align}\label{w eq}
w_{ss} - w_{yy} - \bigl(\nu^2-\tfrac14\bigr) s^{-2} w &= F,
\end{align}
where $\nu= \tfrac32+\tfrac2p$,
\begin{align*}
F(s,y)&:= -\tE + \tfrac{2(p+2)(p+1)}{p^2} s^{-2} (\lambda\rho^p-1) w + \tfrac{2(p+2)}{p^2} \lambda  \!\int_0^1\! w^2 f''\bigl(\tilde v + \theta w\bigr) (1-\theta) \, d\theta,
\end{align*}
and $f(w)=|w|^p w$.  We will soon estimate $F$ in the space $X^m_{\delta-1}$.  The motivation for this (and indeed, the introduction of these norms themselves) is the fact that
Corollary~\ref{C:singular wave} provides the necessary estimates for solutions to \eqref{w eq}.  Specifically,
\begin{align}\label{X estimates}
\| w \|_{X^m_\delta([0,s_0])} \lesssim_m  s_0 \| F \|_{X^m_{\delta-1}([0,s_0])} \quad\text{provided}\quad \delta > \tfrac2p.
\end{align}
Note that to derive this, one must consider \eqref{v diff} with varying values for $\delta$, not merely that appearing in the formula above.

To continue, note that $(\lambda\rho^p)(0,y)\equiv1$ and so, with a little extra work, one finds
$$
 \bigl\| \partial_s^\mu \partial_y^\beta  s^{-2} (\lambda\rho^p-1) \bigr\|_{L^\infty_y} \lesssim s^{-1-\mu} \quad\text{when}\quad \mu+\beta < k-1.
$$
Therefore,
\begin{align}\label{dude1 in X}
\bigl\| s^{-2} (\lambda\rho^p-1) w \bigr\|_{X^m_{\delta-1}([0,s_0])}
\lesssim \| w \|_{X^m_\delta([0,s_0])}   \quad\text{provided}\quad m < k-1.
\end{align}

We now move to the third term in $F$, which involves $f(w)=|w|^p w$.  Note that $f$ fails to be $C^\infty$ at $w=0$. This will not be a problem for us, because for $s_0$ small
we have $\tilde v(s,y) \sim s^{-2/p}$ uniformly on $[0,s_0]\times\R$; moreover, from the argument that follows, we see that $\|w\|_{L^\infty_{s,y}([0,s_0]\times\R)} \ll 1$.  As a result of this,
$$
\bigl| f^{(\ell+2)}(\tilde v + \theta w) \bigl| \sim s^{-\frac2p(p-\ell-1)} \quad\text{on}\quad [0,s_0]\times\R
$$
uniformly for $\theta\in[0,1]$.  Using this we find
\begin{align*}
\Bigl| \partial_s^\alpha \partial_y^\beta f''(\tilde v + \theta w) \Bigr| \lesssim \sum_{\ell=0}^{\alpha+\beta} \sum_{\vec\alpha\,\vdash\alpha} \sum_{\vec\beta\,\vdash\beta}
    s^{-\frac2p(p-\ell-1)} \prod_{j=1}^\ell \Bigl[ s^{-\frac2p-\alpha_j} + \bigl|\partial_s^{\alpha_j}\partial_s^{\beta_j} w\bigr| \Bigr],
\end{align*}
where $\vec\alpha,\vec\beta$ are partitions of $\alpha,\beta$ into $\ell$ parts.  This directly yields
\begin{align*}
\Bigl| \partial_s^\alpha \partial_y^\beta f''(\tilde v + \theta w) \Bigr| \lesssim s^{-\frac2p(p-1)-\alpha} \exp\bigl(\|w\|_{X^m_\delta} \bigr)
    \quad\text{for } \alpha+\beta\leq m \text{ and any } \delta >0.
\end{align*}
Naturally, the implicit constant depends on $m$.  Proceeding from here we find
\begin{align}\label{dude2 in X}
\bigl\| \lambda w^2 f''\bigl(\tilde v + \theta w\bigr) \bigr\|_{X^m_{\delta-1}([0,s_0])}
\lesssim \| w \|_{X^m_\delta([0,s_0])}^2 \exp\bigl( \|w\|_{X^m_\delta([0,s_0])} \bigr)
\end{align}
uniformly for $\theta\in[0,1]$, provided $m < k$.  Indeed, one can exhibit additional (helpful) positive powers of $s_0$ on the right-hand side.

With minor modifications, these estimates also show that
\begin{equation}\label{dude2' in X}
\begin{aligned}
\bigl\| \lambda w^2 f''\bigl(\tilde v &+ \theta w\bigr) - \lambda \tilde w^2 f''\bigl(\tilde v + \theta \tilde w\bigr) \bigr\|_{X^m_{\delta-1}([0,s_0])} \\
&\lesssim \| w - \tilde w \|_{X^m_\delta([0,s_0])} \exp\bigl( 2\|w\|_{X^m_\delta([0,s_0])} + 2\|\tilde w\|_{X^m_\delta([0,s_0])} \bigr).
\end{aligned}
\end{equation}

Let us now put all the pieces together.  We choose $m\leq \lfloor k-3-\frac4p\rfloor$, finite when $k=\infty$, and $\delta > \frac2p$ so that $m+\delta\leq k-3-\frac2p$.
When $k<\infty$ this is possible, since in this case $4/p$ is not an integer.  By combining \eqref{Et in X}, \eqref{X estimates}, \eqref{dude1 in X}, and \eqref{dude2 in X}, we see that the sequence of Picard iterates $w_n$ obey
$$
\| w_{n+1} \|_{X^m_\delta([0,s_0])} \lesssim s_0 \Bigl\{ 1 + \| w_n \|_{X^m_\delta([0,s_0])} + \| w_n \|_{X^m_\delta([0,s_0])}^2 \exp\bigl( \|w_n\|_{X^m_\delta([0,s_0])} \bigr) \Bigr\}.
$$
Thus by choosing $s_0$ sufficiently small, we can guarantee that $w_n$ are uniformly small in $X^m_\delta([0,s_0])$.

The term estimated in \eqref{dude1 in X} is linear in $w$ and so the bound works equally well for differences.  Using this and \eqref{dude2' in X} we obtain
$$
\| w_{n+1} - w_n \|_{X^m_\delta} \lesssim s_0 \| w_n - w_{n-1} \|_{X^m_\delta} \exp\bigl(2 \|w_n\|_{X^m_\delta} +2 \|w_{n-1}\|_{X^m_\delta} \bigr),
$$
which shows that the Picard iterates do indeed converge (for $s_0$ small).

For each integer $m\leq \lfloor k-3 -\frac4p\rfloor$, we have proved the existence of a $C^{m}_{s,y}$ solution to \eqref{modified nlw} on a small neighborhood of $s=0$,
which additionally satisfies \eqref{blow}.  Naively, the interval $[0,s_0]$ of existence may depend on $m$.  However, simple (inductive) persistence of regularity arguments
using only Duhamel's formula for the one-dimensional wave equation show that the solution $v$ cannot blow up in $C^m_{s,y}$ while remaining bounded in $C^0_{s,y}$.
We use, of course, that $\lambda\in C^{k-1}_{s,y}$.  This observation has two important consequences:  First, in the case $k=\infty$ it shows that the solution we construct
is indeed $C^\infty$.  Second, it shows that the time of existence depends only on the  $C^{\lceil 2+4/p \rceil}_{s,y}$ norm of $\lambda$.
\end{proof}

The reader will have undoubtedly noticed that in order to produce any solution at all (say merely $C^0_{s,y}$), this argument requires a great deal of regularity on
the conformal factor $\lambda$. This is not purely an artifact of the proof, but reflects a fundamental underlying phenomenon, which we explain next.

Even in the case $\lambda\equiv 1$, there are multiple solutions that diverge at $s=0$ at exactly the rate prescribed in \eqref{blow}.  Indeed, there is a one-parameter family of
such solutions that depend on $s$ alone.  To be more precise, using energy conservation, one sees that any such solution $v(s)$ obeys
$$
s = \int_{v(s)}^\infty \tfrac{p}2 a^{-\frac{p+2}{2}} \Bigl[1+ \tfrac{p^2}{2} E a^{-(p+2)} \bigr]^{-\frac12} \, da,
$$
where $E=\frac12|v_s|^2 - \frac{2}{p^2}|v|^{p+2}$ denotes the (conserved) energy density.  From this it is easy to derive the asymptotic behavior
$$
v(s) = s^{-2/p} \Bigl( 1 -\tfrac{p^2 E}{2(3p+4)} s^{2(p+2)/p} + O\bigl(s^{4(p+2)/p}\bigr) \Bigr).
$$
Thus, to uniquely specify such a solution we require a parametrix $\tilde v$ for which
$$
\tilde v_{ss}   - \tilde v_{yy}  - \tfrac{2(p+2)}{p^2} |\tilde v|^p \tilde v = o(s^{2/p}),
$$
which corresponds to $\alpha=0$ and $k=\lceil 3+\frac4p \rceil$ in \eqref{accuracy}.  In this sense, our regularity requirements are minimal for the construction
of $w$ by contraction mapping arguments, indeed, by any method that ensures uniqueness.

We now return to the main theme of the section and prove the results stated in the introduction.

\begin{proof}[Proof of Theorem~\ref{T:main}]  As mentioned at the beginning of the section, the construction will be a direct application of Corollary~\ref{C:line reduction} and Theorem~\ref{T:main'}.  Suppose therefore that either {\rm(i)} $p>0$ and $k=\infty$, or {\rm(ii)} $\frac4p\not\in \N$ and $3+\frac4p<k\in \N\cup\{\infty\}$.  Let $\Sigma=\{t=\sigma(x)\}$ be a $C^k$ uniformly space-like hypersurface.  Then by Corollary~\ref{C:line reduction}, the solution $v$ to \eqref{modified nlw} described in Theorem~\ref{T:main'} gives rise to a solution $u\in C^{\lfloor k-3 -\frac4p\rfloor}_{t,x} $ to \eqref{nlw} in a neighborhood of the hypersurface, namely, on
$\{\sigma_0(x)\leq t<\sigma(x)\}$, where the Cauchy hypersurface $\Sigma_0=\{t=\sigma_0(x)\}$ is the image of the line $\{s=s_0(p,\lambda), \, y\in\R\}$ via the conformal mapping $\Phi$ from Proposition~\ref{P:conformal exists}. Here $s_0(p,\lambda)$ is as in Theorem~\ref{T:main'}.  Moreover, by Corollary~\ref{C:line reduction} and
\eqref{blow}, the solution $u$ has the desired blowup behavior.
\end{proof}

\begin{proof}[Proof of Corollary~\ref{C:Cantor}]
Let $E\subset\R$ be a compact set.  We start by constructing a smooth function that has $E$ as a level set.  Indeed, let $I_j$ denote the bounded connected components of
$\R\setminus E$ and let $\phi:\R\to[0,1]$ be a smooth function, with $\phi(x)\equiv 0$ when $x\leq 0$, $\phi(x)>0$ when $x>0$, and $\phi(x)\equiv 1$ when $x\geq1/2$.
We define the hypersurface $\Sigma=\{t=\sigma(x)\}$ as follows:
\begin{equation*}
\sigma(x)= \eps
\begin{cases}
1, &\quad x\in E\\
1+\exp\bigl(-\frac1{|I_j|}\bigr) \phi\bigl( \tfrac{\dist(x,\partial I_j)}{|I_j|}\bigr), &\quad x\in I_j\\
1+\phi\bigl(\dist(x,E)\bigr), &\quad x<\inf E \, \text{ or }\,  x>\sup E.
\end{cases}
\end{equation*}
Here $\eps>0$ is a small constant that will be chosen later. Observe that $\| \sigma \|_{C^k_x} \lesssim_k \eps$, for each integer $k$.

Now consider the conformal mapping $\Phi$ of $\{s\geq 0,y\in\R\}$ onto $\{t\leq\sigma(x)\}$ constructed in Proposition~\ref{P:conformal exists}.
From this construction, we see that for $\eps$ small,
$$
\bigl\| D\Phi - \Id\bigr\|_{C^k_{s,y}} \lesssim_k \eps
$$
for any integer $k$, where $D\Phi$ denotes the derivative of $\Phi$.  Thus,
$$
\dist( \Phi^{-1}(\{0,x\}), \{s=0\} \bigr) \lesssim \eps
$$
and the conformal factor obeys
$$
\| \lambda - 1 \|_{C^k_{s,y}} \lesssim_k \eps
$$
for each integer $k$.  Combining these two observations, we see that when $\eps$ is sufficiently small, the smooth solution to
$$
v_{ss}   - v_{yy}  - \tfrac{2(p+2)}{p^2} \lambda |v|^p v = 0
$$
that was constructed in Theorem~\ref{T:main'} will exist throughout the region $\Phi^{-1}(\{0\leq t < \sigma(x)\})$.  (Of course, this relies on the fact that $s_0(p,\lambda)$ depends robustly on $\lambda$.)  Therefore, $u=v\circ \Phi^{-1}$ is a smooth solution to \eqref{nlw} defined on $\{0\leq t < \sigma(x)\}$ that blows up at time $t=\eps$ precisely on the set $E$.
\end{proof}

%%%%%%%%%%%%%%%%%%%%%%%%%%%%%%%%%%%%%%%%%%%%%%%%%%%%%%%%%%%%%%%%%%%%%%%%%%%%%%%%%%%%%%%%
%
%
%                                   Section
%
%
%%%%%%%%%%%%%%%%%%%%%%%%%%%%%%%%%%%%%%%%%%%%%%%%%%%%%%%%%%%%%%%%%%%%%%%%%%%%%%%%%%%%%%%

\end{document}